\def\hyph{-\penalty0\hskip0pt\relax}
\theoremstyle{plain}
\newtheorem{thm}{Theorem}[section]
\newtheorem{cor}[thm]{Corollary}
\newtheorem{lem}[thm]{Lemma}
\newtheorem{prop}[thm]{Proposition}
\newtheorem{prob}[thm]{Problem}
\theoremstyle{definition}
\newtheorem{rmk}[thm]{Remark}
\newtheorem{ex}[thm]{Example}
\newtheorem{df}[thm]{Definition}
\theoremstyle{remark}
\newtheorem{con}[thm]{Construction}
\newtheorem{maintheorem}{Theorem}
\newcommand{\R}{\mathbb R}
\newcommand{\Z}{\mathbb Z}
\newcommand{\N}{\mathbb N}
\newcommand{\U} {\mathcal U}
\newcommand{\Sdos} {{\mathbb S}^2}
\newcommand{\Tdos} {{\mathbb T}^2}
\newcommand{\Suno}{\mathbb{S}^1}
\renewcommand{\epsilon}{\varepsilon}
\newcommand{\expc}{\xi}
\DeclareMathOperator{\diam}{diam}
\DeclareMathOperator{\interior}{int}
\DeclareMathOperator{\dist}{dist}
\begin{document}

\author{M. Achigar, A. Artigue and J. Vieitez\footnote{the authors are partially supported by Grupo de Investigaci\'on "Sistemas Din\'amicos" CSIC (Universidad de la
Rep\'ublica), SNI-ANII and PEDECIBA, Uruguay}}
\title{New cw\hyph expansive homeomorphisms of surfaces}
\selectlanguage{english}
\date{\today}
\maketitle

%

\selectlanguage{english}
\begin{abstract}
In this article we characterize monotone extensions of cw\hyph expansive homeomorphisms of compact metric spaces.
For this purpose we introduce the notion ``half cw\hyph expansivity" and
we study its natural quotient space, specially in the case of compact surfaces.
These results are applied to construct new examples of cw\hyph expansive homeomorphisms
of compact surfaces with infinitely many fixed points and empty wandering set.
These examples are quotients of topological perturbations of pseudo-Anosov diffeomorphisms.
We also show that there is a cw\hyph expansive homeomorphism with the shadowing property of the 2-sphere.
\end{abstract}

\section{Introduction}
In the theory of Dynamical Systems some topological properties play a key role.
Most of them are shared by hyperbolic systems as for instance expansivity,
specification and shadowing property.
With respect to expansivity, several generalizations have been proposed as for instance $N$\hyph expansivity ($N\geq 1$) \cite{Mo1}, measure\hyph expansivity \cite{MoSi}, countable\hyph expansivity \cite{ArCa}, hyper\hyph expansivity \cite{ArH},
\begin{wraptable}{r}{5.4cm}
\begin{tabular}{|c|}
  \hline\hline
  \phantom{\LARGE X}
  hyper\hyph expansivity
  \phantom{\LARGE X}\\
  $\downarrow$ \\
  expansivity $\leftrightarrow$ 1\hyph exp \\
  $\downarrow$ \\
  \hspace{1cm}$N$\hyph exp $\rightarrow$ $h$\hyph exp\\
  $\downarrow$ \\
  measure\hyph exp $\leftrightarrow$ countable\hyph exp  \\
  $\downarrow$ \\
  cw\hyph exp\\
  \hline\hline
\end{tabular}
\end{wraptable}
 $h$\hyph expansivity (entropy expansivity) \cite{Bowen72} and
cw\hyph expansivity (continuum\hyph wise expansivity) \cite{Kato93}.
See the diagram relating the classes of expansivity considered.

As we can observe, the most general properties in the diagram are cw\hyph expansivity and $h$\hyph expansivity. The relation between $h$\hyph expansivity and cw expansivity is not direct.
 Indeed, isometries are $h$\hyph expansive and not cw\hyph expansive, and a
 pseudo\hyph Anosov map of the 2-sphere (see \cite{PaVi}) is cw\hyph expansive but not entropy expansive (although it is asymptotically entropy expansive). 
But under generic conditions in the $C^1$ setting and $C^1$-far from homoclinic tangencies we have that measure\hyph expansivity, with a measure absolutely continuous with respect to Lebesgue measure, and $h$\hyph expansivity both hold \cites{DFPV,PaVi15}.
In this paper we address the study of cw\hyph expansivity defined on compact metric spaces that, as we have said above, is a property shared for several dynamical systems exhibiting chaotic behavior. For instance, Anosov diffeomorphisms, pseudo\hyph Anosov diffeomorphisms, pseudo\hyph Anosov maps with 1-prongs of the sphere $\Sdos$ and expansive homeomorphisms all are cw-expansive.

Among the results of this article we exhibit a new class of examples 
of cw-expansive homeomorphisms with infinitely many fixed points and without wandering points, we show that a generalized pseudo\hyph Anosov map of $\Sdos$ has the shadowing property and we generalize the notion of cw\hyph expansivity in a way that allows us to obtain an open property in the $C^0$-topology. We call this generalization \emph{half cw\hyph expansivity}.

Let us introduce some definitions to explain the results of this paper. Let $f\colon M\to M$ be a homeomorphism of a metric space $(M,\dist)$.
We say that $C\subseteq M$ is a \emph{continuum} if it is compact and connected.
Following Kato \cite{Kato93}, $f$ is said to be \emph{cw\hyph expansive} (continuum-wise expansive) if there is $\expc>0$ such that
if $C\subseteq M$ is a continuum and $\diam f^n(C)\leq\expc$ for all $n\in\Z$ then
$C$ is a singleton. In this case we say that $\expc$ is a \emph{cw\hyph expansivity constant}.
We recall that $f$ is \emph{expansive} if there is $\expc>0$ such that $\dist\bigl(f^n(x),f^n(y)\bigr)\leq\expc$ for all
$n\in\Z$ implies $x=y$.
It is remarkable that on compact surfaces expansive homeomorphisms are conjugate to
pseudo-Anosov diffeomorphisms \cites{L,Hi}.

In this paper we consider perturbations of cw\hyph expansive homeomorphisms in the $C^0$-topology.
In \cite{Le83} Lewowicz considered this kind of perturbations for expansive
homeomorphisms. He proved that if $f$ is sufficiently close to a given expansive homeomorphism of a compact metric space $M$, then there is $\expc>0$ with the following property:
$$\text{if }\dist\bigl(f^n(x),f^n(y)\bigr)\leq\expc\text{ for all }n\in\Z\text{ then }\dist(x,y)\leq\expc/2.$$
This property of $f$ allowed him to define the equivalence relation on $M$: $x\sim y$ if
$\dist\bigl(f^n(x),f^n(y)\bigr)\leq\expc$ for all $n\in\Z$.
In \cite{Le83} it is claimed that in this case the quotient space $\tilde M=M/\sim$ is metrizable
and the induced homeomorphism $\tilde f\colon \tilde M\to\tilde M$ is expansive.
The details of this construction were given in \cite{CS} by Cerminara and Sambarino.

If $f$ is a $C^0$-perturbation of a cw\hyph expansive homeomorphism then a similar situation arises.
In this case $f$ satisfies the following condition:
\begin{equation}
 \label{eqPertCwExp}
 \text{if }\diam f^n(C)\leq\expc\text{ for all }n\in\Z\text{ then }\diam C\leq\expc/2
\end{equation}
for every continuum $C\subseteq M$.
In \cite{FG} the techniques of \cite{CS} were applied to this case, proving that the corresponding quotient is cw\hyph expansive.

In the present paper we will consider this property, independently of any perturbation of a cw\hyph expansive homeomorphism. To this end we introduce the concept of half cw\hyph expansivity. 
For a metric space $(M,\dist)$, a homeomorphism $f\colon M\to M$ and $\expc>0$, a subset $C\subseteq M$ is called \emph{$\expc$-stable} if $\diam f^n(C)\leq\expc$ for all $n\in\Z$.

\begin{df} \label{mitad} Let $(M,\dist)$ be a metric space. A homeomorphism $f\colon M\to M$ is
\emph{half cw\hyph expansive} if there exists
$\expc>0$, such that every $\expc$-stable continuum is $\expc/2$-stable.
In this case we say that $\expc$ is
a \emph{half cw\hyph expansivity constant} and
that $f$ is \emph{half cw\hyph expansive relative to $\dist$ and $\expc$}.
\end{df}

We will prove that half cw-expansivity is an open property in the $C^0$\hyph topology (see Theorem \ref{prop:unifhalfcw}).
It is easy to show that neither expansive homeomorphisms nor cw-expansive ones are open in the $C^0$\hyph topology, so that the study of half cw-expansive homeomorphisms can be thought as an intent to remedy the lack of this property for expansive and cw-expansive homeomorphisms.
\begin{maintheorem}
\label{Teo A}
The set of half cw-expansive homeomorphisms of a compact metric space is open in the $C^0$\hyph topology.
Moreover, if $f\colon M\to M$ is a  half cw\hyph expansive homeomorphism of a compact metric space with constant $\expc$ then there is a $C^0$\hyph neighborhood $\U$ of $f$ such that every $g\in\U$ is half cw\hyph expansive with constant $\expc$.
\end{maintheorem}

For a half cw\hyph expansive homeomorphism $f\colon M\to M$ with constant $\expc$ it is natural to identify two points of $M$ if they lie in a common $\expc$-stable continuum, and indeed this identification defines an equivalence relation on $M$.
In \S \ref{subMonExt} we characterize monotone extensions of cw\hyph expansive homeomorphisms. That is, we consider
commutative diagrams of the form
\begin{equation}
\label{ecuDiag}
\begin{tikzcd}
  M \arrow{r}{f} \arrow{d}{q} & M \arrow{d}{q} \\
  N \arrow{r}{g}& N
\end{tikzcd}
\end{equation}
where $M,N$ are compact metric spaces, $f,g$ are homeomorphisms, $g$ is cw\hyph expansive and
$q$ is continuous, onto and monotone (i.e., the preimage set of any point is connected).
We show that if $f$ is half cw\hyph expansive then the quotient explained above gives a cw\hyph expansive map $g$ and a monotone canonical map $q$. Also, we prove the converse, for every monotone extension $f$ of a cw\hyph expansive homeomorphism $g$ there is a compatible metric in $M$ that makes $f$ a half cw\hyph expansive homeomorphism.

Next we consider cw\hyph expansivity on compact surfaces.
In \S \ref{secMetSp} we show that if $f$ is a monotone extension of a cw\hyph expansive homeomorphism $g$ as in diagram \eqref{ecuDiag}, the classes (preimages by $q$ of singletons) are sufficiently small and $M$ is a compact surface, then $N$ is homeomorphic to $M$, see Theorem \ref{thm:tildeM=M}.

\begin{maintheorem}\label{Teo B}
If $M$ is a closed surface with a Riemannian metric,
then there is $\epsilon_0>0$ such that if
$f\colon M\to M$ is a half cw\hyph expansive homeomorphism with half cw\hyph expansivity constant $\expc\leq\epsilon_0$
then the quotient space $\tilde M$ is homeomorphic to $M$.
\end{maintheorem}

This result is based on Moore's Theorem on plane decompositions \cite{Mo25} in the version of Roberts-Steenrod \cite{RoSt} for compact surfaces.
For manifolds of arbitrary dimension we show that no equivalence class induced by $q$
separates $M$, which implies that the codimension-one Betti number of each class is zero.

In \S\ref{secSurfWBdryCircle} we consider some examples on compact surfaces with boundary.
Also, we prove that for small enough constant of half cw-expansivity 
no non-trivial Peano space on $\R^2$ admits a half cw-expansive homeomorphism. In particular this is valid for the circle $\Suno$.

\begin{maintheorem} \label{Teo C}
The circle only admits trivial half cw\hyph expansive homeomorphisms. Moreover, if $X$ is a nontrivial Peano space contained in the plane then $X$ only admits trivial half cw-expansive homeomorphisms.
\end{maintheorem}

On compact surfaces there are cw\hyph expansive homeomorphisms that are not expansive.
Some of them are $m$-\emph{expansive}.
We recall that for $m\geq 1$ a homeomorphism $f$ is $m$-\emph{expansive} \cite{Mo1} if there is $\expc>0$ such that if $A\subseteq M$ and $\diam f^n(A)\leq\expc$ for all $n\in\Z$ then $A$ has at most $m$ points.
In \cite{APV} it is shown that the genus two surface admits a 2\hyph expansive homeomorphism that is not expansive.
In \cite{ArDend} it is shown that a pseudo-Anosov map with 1-prong singularities of the
2-sphere is cw\hyph expansive but not $m$\hyph expansive for any $m\geq1$. In \cite{ArRobN} the example of \cite{APV} is generalized and it is proved that there are $C^r$-robustly, $r\geq2$,
$m$\hyph expansive diffeomorphisms that are not Anosov diffeomorphisms.
In \cite{ArAnomalous} another variation is considered to prove that a local stable set may be connected but not locally connected, for a cw\hyph expansive homeomorphism of a surface.

All the examples mentioned in the previous paragraph have a finite number of fixed points.
In \S \ref{secPartPert} new examples of cw-expansive homeomorphisms are built. These examples, defined on the torus $\Tdos$, have the particular feature that have infinitely many fixed points and moreover their non-wandering set is the whole manifold.
\begin{maintheorem} \label{Teo D}
There exist cw\hyph expansive homeomorphisms on tori that has infinitely many fixed points and empty wandering set.
\end{maintheorem}

In particular these homeomorphisms are not $m$\hyph expansive for any $m\geq1$.
We recall that $x\in M$ is \emph{wandering} for $f\colon M\to M$ if there is an open set $U$ such that $U\cap f^n(U)=\varnothing$
for all $n\neq 0$.
For the construction of such examples we start with an Anosov diffeomorphism on the 2-torus.
Then we perform a suitable $C^0$-perturbation to obtain infinitely many fixed points.
Finally, we consider a quotient that gives a cw\hyph expansive homeomorphism, which we already know that is defined on a 2-torus again. The hard part, for our purposes, is to perform the perturbation in such a way that:
1) it adds no wandering point and
2) the (infinitely many) fixed points are not identified in the quotient, that is, there must be no continuum with small iterates containing any pair of the fixed points.
We will consider area preserving perturbations to ensure 1).

%
%

We finish in \S \ref{secCwSh} showing that there is a cw-expansive homeomorphism that is not hyperbolic and has the shadowing property. Indeed, we show a pseudo-Anosov homeomorphism with spines of $\Sdos$ that has this property (see Theorem \ref{thm:ejemplocw+sh}). 

\begin{maintheorem} \label{Teo E}
There exists a pseudo-Anosov homeomorphisms of $\Sdos$ that is cw\hyph expansive
and has the shadowing property.
\end{maintheorem}

For this proof we consider a homeomorphism $g\colon\Sdos\to\Sdos$ that is an \emph{antipodal quotient} of an Anosov automorphism of $\Tdos$ (see \S\ref{secCwSh}). 
It is well known that $g$ is not expansive. Then,
in light of Theorem \ref{Teo E}, we see that \cite{Lee}*{Corollary 2.4} may not be correct\footnote{In \cite{Lee}*{Corollary 2.4} it is claimed (among other things) that every cw\hyph expansive homeomorphism with the shadowing property on a compact manifold is expansive.}.
A direct proof of the non\hyph expansivity of $g$ can be found in \cite{Walters}*{Example 1, p. 140}.
Moreover, in \cite{ArDend}*{Proposition 2.2.2}
it is shown that for all $\epsilon>0$ there is a Cantor set $K\subseteq\Sdos$ such that $\diam g^n(K)\leq\epsilon$ for all $n\in\Z$.
Nevertheless, the most powerful argument to prove that $g$ is not expansive comes from \cites{Hi,L}: the 2-sphere does not admit expansive homeomorphisms.

\emph{Acknowledgements}. 
We wish to thank the referee for fruitful comments and suggestions on this paper.

\section{Extensions of cw\hyph expansive homeomorphisms}
\label{secExtCwExpHom}
Let $M$ be a topological space. A non-empty, compact and connected subset $C\subseteq M$ is called $\emph{continuum}$. A continuum is called \emph{trivial} if it has only one point.

\begin{df}[Kato \cite{Kato93}]
A homeomorphism $f\colon M\to M$ is \emph{cw\hyph expansive} if there exist a compatible metric $\dist$ on $M$ and $\expc>0$, such that every $\expc$-stable continuum is trivial. Such $\expc$ will be called a \emph{cw\hyph expansivity constant}.
\end{df}

If $\U$ is an open cover of $M$ and $C\subseteq M$,
we denote $C\prec\U$ to mean that $C\subseteq U$ for some $U\in\U$.
We say that $\U$ is a \emph{cw\hyph expansivity cover for $f$} if
$C\subseteq M$ is a continuum and $f^n(C)\prec\U$ for all $n\in\Z$ then $C$ is trivial.

\begin{lem}\label{lem:cwexptop<=>cwexpmet} If $M$ is a compact metrizable topological space and $f\colon M\to M$ is a homeomorphism then the following conditions are equivalent:
\begin{enumerate}
 \item The homeomorphism $f$ is cw\hyph expansive.
 \item There exists a cw\hyph expansivity cover $\U$ for $f$.
 \item For every compatible metric $\dist$ on $M$ there exists a cw\hyph expansivity constant $\expc$ for $f$.
\end{enumerate}
\end{lem}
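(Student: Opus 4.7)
The plan is to run the cycle $(3)\Rightarrow(1)\Rightarrow(2)\Rightarrow(3)$, using condition (2) as a purely topological bridge between the two metric formulations. The implication $(3)\Rightarrow(1)$ is immediate: since $M$ is metrizable we may fix any compatible metric and apply (3) to it.

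For $(1)\Rightarrow(2)$ I would pick a compatible metric $\dist$ and constant $\expc>0$ witnessing cw-expansivity, and set $\U=\{B_{\dist}(x,\expc/2):x\in M\}$. Every member of $\U$ has $\dist$-diameter at most $\expc$, so any continuum $C$ with $f^n(C)\prec\U$ for all $n\in\Z$ is automatically $\expc$-stable under $\dist$, and therefore trivial by cw-expansivity. This shows that $\U$ is a cw-expansivity cover.

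The substantive step is $(2)\Rightarrow(3)$, and here compactness of $M$ plays an essential role. Given an arbitrary compatible metric $\dist'$, the open cover $\U$ supplied by (2) admits a Lebesgue number $\expc'>0$ with respect to $\dist'$, so every subset of $\dist'$-diameter at most $\expc'$ is contained in some element of $\U$. Consequently, if $C$ is a continuum that is $\expc'$-stable relative to $\dist'$, then $f^n(C)\prec\U$ for all $n\in\Z$, and (2) forces $C$ to be trivial. Thus $\expc'$ is a cw-expansivity constant for $f$ with respect to $\dist'$.

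The principal obstacle, modest as it is, is conceptual rather than computational: one must recognize that although the cover $\U$ produced in $(1)\Rightarrow(2)$ is built from metric balls, it is purely topological data, and hence its Lebesgue number with respect to \emph{any} other compatible metric yields the uniform constant required by (3). Once (2) is placed as an intermediate condition, the equivalence reduces to the Lebesgue-number lemma, and no further delicate argument is needed.
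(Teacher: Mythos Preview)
Your argument is correct and is precisely the natural elaboration of what the paper has in mind: the paper does not spell out a proof at all, merely remarking that the lemma ``is direct from the definitions,'' and your cycle $(3)\Rightarrow(1)\Rightarrow(2)\Rightarrow(3)$ with the Lebesgue-number lemma for $(2)\Rightarrow(3)$ is exactly the standard way to make that remark rigorous.
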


The proof of Lemma \ref{lem:cwexptop<=>cwexpmet} is direct from the definitions.

\subsection{Half cw\hyph expansivity}
\label{subHalfCw}

Recall Definition \ref{mitad} of half cw-expansivity.


\begin{rmk} An equivalent condition for half cw\hyph expansivity is that for some $\expc>0$ every $\expc$-stable continuum has diameter less or equal than $\expc/2$.
\end{rmk}

\begin{rmk}
\label{rmkCteTrivial}
Notice that every homeomorphism of $M$ is half cw\hyph expansive if $\expc\geq2\diam M$.
In this case we say that $\expc$ is a \emph{trivial half cw\hyph expansivity constant}.
Naturally, the case of interest is for small half cw\hyph expansivity constants.
\end{rmk}

The next example shows that the existence of a non-trivial half cw\hyph expansivity constant depends on the compatible metric.
In \S \ref{secSurfWBdryCircle} we give examples of homeomorphisms of the 2-disk showing this phenomenon, see Propositions \ref{propHalfDepMet} and \ref{propHcwDisk}.

\begin{ex}Let $g\colon M\to M$ be a cw\hyph expansive homeomorphism of the compact metric space $(M,\dist')$
with cw\hyph expansivity constant $\expc=1$.
Consider $\dist(x,y)=\min\{\dist'(x,y),1\}$, a compatible metric on $M$.
Define $f\colon M\times [0,1]\to M\times [0,1]$ as $f(x,t)=(g(x),t)$.
On $M\times [0,1]$ consider the following two metrics:
\begin{equation*}
\begin{split}
\dist_1\bigl((x,t),(y,s)\bigr)&=\max\{\dist(x,y),|s-t|\},\\
\dist_2\bigl((x,t),(y,s)\bigr)&=\max\{4\dist(x,y),|s-t|\}.
\end{split}
\end{equation*}
Both metrics are compatible. For $\dist_1$ we have that no $\expc<2\diam_1M$ is a half cw\hyph expansivity constant,
but for $\dist_2$ we have the half cw\hyph expansivity constant $\expc=3<2\diam_2M$, where $\diam_i$ is the diameter associated to $\dist_i$.
\end{ex}

We recall that the $C^0$-topology in the set of homeomorphisms of a compact metric space $(M,\dist)$ is defined by the $C^0$-metric
\begin{equation}
\label{ecuDistC0}
\dist_{C^0}(f,g)=\sup_{x\in M}\dist\bigl(f(x),g(x)\bigr),
\end{equation}
for homeomorphisms $f,g\colon M\to M$.
We assume that the reader is familiar with the Hausdorff metric defined on the compact subsets of $M$.
We will use the fact that the space of subcontinua of $M$ is compact with this metric.
A proof can be found in \cite{Nadler}.

Theorem \ref{Teo A} follows immediately from  the following Theorem \ref{prop:unifhalfcw} item {\it 3.}
\begin{thm}\label{prop:unifhalfcw}
If $f\colon M\to M$ is a half cw\hyph expansive homeomorphism of a compact metric space with constant $\expc$ then:
\begin{enumerate}
\item There is $\alpha>\expc$ such that every $\alpha$-stable continuum $C\subseteq M$ is $\expc/2$-stable.
\item For every $\epsilon\in(\expc,\alpha)$ there is $m\in\N$ such that:
 $$\displaystyle\sup_{|n|\leq m}\diam f^n(C)\leq\alpha\text{ implies }\diam C<\epsilon/2$$
for every continuum $C\subseteq M$.
\item For every $\epsilon\in(\expc,\alpha)$ there is a $C^0$-neighborhood $\U$ of $f$ such that every $g\in\U$ is half cw\hyph expansive with constant $\epsilon$.
\end{enumerate}
\end{thm}

\begin{proof}
(1) Arguing by contradiction suppose that there is a sequence $(C_k)_{k\in\N}$
of $(\expc+1/k)$-stable continua that are not $\expc/2$-stable.
Since $\expc$ is a half cw\hyph expansivity constant, we have that $C_k$ is not $\expc$-stable for all $k\in\N$.
Then $\diam f^n(C_k)\leq\expc+1/k$ for all $n\in\Z$, and
for all $k\in\N$ there is $n_k\in\Z$ such that $\diam f^{n_k}(C_k)>\expc$.
By compactness we may assume that $\bigl(f^{n_k}(C_k)\bigr)_{k\in\N}$ converges to $C$ with respect to the Hausdorff metric.
Then $C$ is a continuum such that $\diam C\geq\expc$ and $\diam f^n(C)\leq\expc$ for all $n\in\Z$. This contradicts that $\expc$ is a half cw\hyph expansivity constant for $f$.

(2) To prove that the required $m\in\N$ exists we will argue by contradiction.
Suppose that there exists $\epsilon\in(\expc,\alpha)$ such that for each $m\in\N$ there is a continuum $C_m\subseteq M$ with $\sup_{|n|\leq m}\diam f^n(C_m)\leq\alpha$ and $\diam C_m\geq\epsilon/2$.
If $C\subseteq M$ is a limit continuum of $(C_m)_{m\in\N}$ then $\diam f^n(C)\leq\alpha$ for all $n\in\Z$ and $\diam C\geq\epsilon/2\geq\expc/2$. Then $C$ is an $\alpha$-stable continuum that is not $\expc/2$-stable, contradicting (1).

(3) For the given $\epsilon$ consider $m\in\N$ given by (2). Let $\U$ be a $C^0$-neighborhood of $f$ such that  $\sup_{|n|\leq m}\diam g^n(C)\leq\epsilon$ implies $\sup_{|n|\leq m}\diam f^n(C)\leq \alpha$ for every continuum $C\subseteq M$ and $g\in \U$. Let us show that $\epsilon$ is a half cw\hyph expansivity constant for every $g\in\U$. Suppose that $\diam g^n(C)\leq \epsilon$ for all $n\in\Z$. From the choice of $\U$, this implies that $\sup_{|n|\leq m}\diam f^n(C)\leq\alpha$. Then, as $m$ was chosen as in (2), we have that $\diam C<\epsilon/2$. That is, $\epsilon$ is a half cw\hyph expansivity
constant for every $g\in\U$.
\end{proof}

\begin{cor}
\label{cor:entornohalfcw} Let $f\colon M\to M$ be a cw\hyph expansive homeomorphism of a compact metric space with constant $\expc$. Then there is a $C^0$-neighborhood $\U$ of $f$ such that every $g\in\U$ is half cw\hyph expansive with constant $\expc$.
\end{cor}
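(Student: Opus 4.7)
The plan is to argue by contradiction using a Hausdorff-compactness argument in the spirit of Proposition~\ref{prop:unifhalfcw}. Suppose no such $C^0$-neighborhood $\U$ exists. Then we can produce a sequence of homeomorphisms $g_k$ with $g_k\to f$ in the $C^0$-metric and, for each $k$, a continuum $C_k\subseteq M$ that is $\expc$-stable for $g_k$ (so $\diam g_k^n(C_k)\leq\expc$ for every $n\in\Z$) but satisfies $\diam C_k>\expc/2$.

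I would then pass to a subsequence, still written $(C_k)$, Hausdorff-converging to a continuum $C\subseteq M$, using compactness of the hyperspace of subcontinua of $M$ in the Hausdorff metric (\cite{Nadler}). Since the diameter is continuous in the Hausdorff metric, $\diam C\geq\expc/2>0$, so $C$ is non-trivial. For each fixed $n\in\Z$, uniform convergence $g_k\to f$ gives $g_k^n\to f^n$ uniformly on $M$, hence $g_k^n(C_k)\to f^n(C)$ in the Hausdorff metric, and therefore $\diam f^n(C)=\lim_k \diam g_k^n(C_k)\leq\expc$ for every $n\in\Z$. Thus $C$ is a non-trivial $\expc$-stable continuum for $f$, contradicting the cw-expansivity of $f$ with constant $\expc$.

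The main (rather mild) obstacle is that Proposition~\ref{prop:unifhalfcw}(3) does not apply off-the-shelf, since it supplies a half cw-expansivity constant $\epsilon\in(\expc,\alpha)$ rather than $\expc$ itself. The corollary therefore needs the \emph{full} cw-expansive hypothesis on $f$, not merely the induced half cw-expansivity, to pin the constant at $\expc$. The direct compactness argument above is really a one-continuum version of the scheme used in part~(3): non-triviality of the Hausdorff limit $C$ is forced by $\diam C_k>\expc/2$, and the contradiction is then obtained from cw-expansivity rather than from half cw-expansivity. A subtle point is that no uniformity in $n$ is required to conclude $\diam f^n(C)\leq\expc$ for all $n\in\Z$: one fixes $n$ and lets $k\to\infty$, which is precisely why the elementary pointwise-in-$n$ continuity of iterates suffices.
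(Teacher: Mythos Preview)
Your argument is correct, but it takes a different route from the paper's proof. The paper derives the corollary purely from Proposition~\ref{prop:unifhalfcw}: since $f$ is cw-expansive with constant $\expc$, every $0<\epsilon\leq\expc$ is a half cw-expansivity constant for $f$, and one can then choose two such constants $\epsilon_1,\epsilon_2$ with $\epsilon_1/2<\epsilon_2<\expc<\epsilon_1$. Two applications of Proposition~\ref{prop:unifhalfcw}(3) give neighborhoods $\U_1,\U_2$ on which $\epsilon_1$ and $\epsilon_2$, respectively, are half cw-expansivity constants; on $\U=\U_1\cap\U_2$ the chain $\expc\text{-stable}\Rightarrow\epsilon_1\text{-stable}\Rightarrow\epsilon_1/2\text{-stable}\Rightarrow\epsilon_2\text{-stable}\Rightarrow\epsilon_2/2\text{-stable}\Rightarrow\expc/2\text{-stable}$ yields the exact constant $\expc$. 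Your direct Hausdorff-limit argument bypasses this bookkeeping and is more self-contained; it essentially re-runs the compactness idea behind Proposition~\ref{prop:unifhalfcw} with the perturbed maps $g_k$ in place of a single $f$, exploiting full cw-expansivity at the limit to kill $C$ outright rather than merely bounding its diameter. The paper's approach, by contrast, shows that the corollary is a formal consequence of the half cw-expansive machinery already established. One small point worth making explicit in your write-up: for negative $n$ the convergence $g_k^n\to f^n$ uses that $g_k\to f$ in $C^0$ on a compact space forces $g_k^{-1}\to f^{-1}$ in $C^0$ as well; this is standard but not entirely trivial.
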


\begin{proof}
 Suppose that $\expc$ is a cw\hyph expansivity constant of $f$.
 Notice that every $0<\epsilon\leq\expc$ is a half cw\hyph expansivity constant.
 By Theorem \ref{prop:unifhalfcw} we can take two half cw\hyph expansivity constants $\epsilon_1,\epsilon_2$ such that
 $\epsilon_1/2<\epsilon_2<\expc<\epsilon_1$.
 Let $\U_i$, $i=1,2$, be the neighborhoods given by Theorem \ref{prop:unifhalfcw}, such that
 if $g\in\U_i$ then $\epsilon_i$ is a half cw\hyph expansivity constant for $g$.
 Define $\U=\U_1\cap \U_2$. From our choice of $\epsilon_1$ and $\epsilon_2$ we see that $\expc$ is a
 half cw\hyph expansivity constant for every $g\in\U$.
\end{proof}

\subsection{Monotone extensions of cw\hyph expansive systems}
\label{subMonExt}

Let $M,N$ be topological spaces and $p\colon M\to N$ a map. We say that
$p$ is a \emph{quotient map} (or an \emph{extension map}) if $p$ is surjective and the quotient (final) topology of $N$ induced by $p$ and the topology of $M$ is the given topology of $N$.
If in addition $f$ and $g$ are homeomorphisms of $M$ and $N$, respectively, such that $p\circ f=g\circ p$, we say that $f$ is an \emph{extension of $g$ by $p$} (or that $g$ is a \emph{quotient of $f$ by $p$}), and we denote it by $(M,f)\stackrel{p}{\to}(N,g)$.

Let $M$ be a topological space and $f\colon M\to M$ a homeomorphism. An equivalence relation $\sim$ on $M$ is called \emph{compatible with $f$} if $x,y\in M$, $x\sim y$ implies $f(x)\sim f(y)$.
Given an equivalence relation $\sim$ on $M$, compatible with $f$, let $\tilde M$ be the topological quotient space, $q\colon M\to\tilde M$ the canonical map, and $\tilde f$ the homeomorphism of $\tilde M$ induced by $f$. Then $f$ is an extension of $\tilde f$ by $q$, and we say that this extension (or quotient) is \emph{induced by $\sim$}.

Any extension $(M,f)\stackrel{p}{\to}(N,g)$ is of the form $(M,f)\stackrel{q}{\to}(\tilde M,\tilde f)$, that is, there exists an equivalence relation $\sim$ on $M$ compatible with $f$ and a homeomorphism $h\colon N\to\tilde M$ such that $q=p\circ h$ and $\tilde f\circ h=h\circ g$. So we may suppose that extensions always comes from compatible equivalence relations.

A map between topological spaces is called \emph{monotone} \cite{Nadler} if the preimage set of any singleton of the codomain is connected.

\begin{rmk}\label{rmkMonCOnj}
If $M$ and $N$ are compact metric spaces and $q\colon M\to N$ is a monotone continuous and onto map, then
$q^{-1}(C)$ is connected for every connected subset $C\subseteq N$. See \cite{Nadler}*{Exercise 8.46}.
\end{rmk}

We will say that an extension $(M,f)\stackrel{q}{\to}(\tilde M,\tilde f)$ is a \emph{monotone extension} if the map $q$ is monotone, i.e., the equivalence classes $[x]$, $x\in M$, are connected.

\begin{df}\label{def:releqiv} Let $(M,\dist)$ be a metric space and $f\colon M\to M$ a half cw\hyph expansive homeomorphism with constant $\expc>0$. We consider the equivalence relation compatible with $f$ defined on $M$ by:
\begin{equation}\label{eqRelEquivCon}
x\sim y\quad\text{if}\quad x,y\in C\text{ for some }\expc\text{-stable continuum }C\subseteq M,
\end{equation}
for $x,y\in M$. Note that $\sim$ depends on $f$, $\dist$, and $\expc$. To simplify the terminology, in this context we refer to the extension $(M,f)\stackrel{q}{\to}(\tilde M,\tilde f)$
as the \emph{extension induced by $f$}.
Here $\tilde M$ denotes the quotient space $M/\sim$ and $q\colon M\to \tilde M$ is the canonical map.
\end{df}


\begin{lem}\label{lem:maxEstCont} Let $M$ be a compact metric space and $f\colon M\to M$ a half cw\hyph expansive homeomorphism with constant $\expc>0$. Then the equivalence classes of the relation $\sim$ of Definition \ref{def:releqiv} are the maximal $\expc$-stable continua.\footnote{That is, for all $x\in M$ the class $[x]$ is a $\expc$-stable continuum and, if $[x]\subseteq C$ for a $\expc$-stable continuum $C$ then $C=[x]$.} In particular, the canonical
map $q\colon M\to\tilde M$ associated to $\sim$ is monotone.
\end{lem}
\begin{proof}
Take $x\in M$ and consider the equivalence class $[x]\subseteq M$. Given $y\in[x]$
denote by $C_y$ a $\expc$-stable continuum containing $x$ and $y$.
Since $C_y\subseteq [x]$ for all $y\in[x]$, we have that $[x]=\bigcup_{y\in[x]}C_y$.
As each $C_y$ is connected and $x\in C_y$ for all $y\in [x]$, we conclude that $[x]$ is connected.
To prove that $[x]$ is closed, consider a sequence $(x_k)_{k\in\N}$ of elements of $[x]$ converging to a point $y\in M$. As $x\sim x_k$ for all $k\in\N$, there exists a sequence $(C_k)_{k\in\N}$ of $\expc$-stable continua such that $x,x_k\in C_k$ for all $k\in\N$. Taking a subsequence we can suppose that $(C_k)_{k\in\N}$ converges, in the Hausdorff metric, to a continuum $C$ that will be a $\expc$-stable continuum as well. Then, as $x,y\in C$, we have $y\in[x]$ and $[x]$ is closed.
Since $f$ is half cw-expansive, every $\expc$-stable continuum is $(\expc/2)$-stable.
Thus, given $y,z\in[x]$, there are $(\expc/2)$-stable continua $C_y,C_z$ such that $x,y\in C_y$ and $x,z\in C_z$.
This implies that $y,z\in C_y\cup C_z$ with $C_y\cup C_z$ a $\expc$-stable continuum. Consequently, $[x]$ is a $\expc$-stable continuum.
Finally, it is clear that if $C$ is a $\expc$-stable continuum and $[x]\subseteq C$ then $C=[x]$.
\end{proof}

Recall that a \emph{decomposition} of a space $M$ is a collection $\mathcal D$ of nonempty, mutually disjoint subsets of $M$ such that $\bigcup\mathcal D=M$. A decomposition $\mathcal D$ of a topological space $M$ is \emph{upper semi-continuous} if for every $D\in\mathcal D$ and every neighborhood $U$ of $D$ in $M$ there exists a neighborhood $V$ of $D$ in $M$ such that $D'\subseteq U$ for every $D'\in\mathcal D$ that meets $V$ (see for example \cite{Nadler}*{Definition 3.5}).

\begin{lem}\label{lem:cociente_metrizable} Let $M$ be a compact metric space, $f\colon M\to M$ a half cw\hyph expansive homeomorphism, $\sim$ the equivalence relation of Definition \ref{def:releqiv}, $\tilde M$ the quotient space by this relation and $q\colon M\to\tilde M$ the canonical map. Then:
\begin{enumerate}
\item[1.] The decomposition of $M$ into equivalence classes is upper semi\hyph continuous.
\item[2.] $q$ is a closed map.
\item[3.] The space $\tilde M$ is metrizable.
\item[4.] For every open set $U\subseteq M$ the set $\hat U=\{x\in M:[x]\subseteq U\}$ is open in $M$ and
$q(\hat U)$ is open in $\tilde M$.
\end{enumerate}
\end{lem}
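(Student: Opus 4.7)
The crux is to show that the equivalence relation $\sim$ is closed in $M\times M$, or equivalently, that the decomposition into classes is upper semi-continuous. The two statements of the lemma will follow from this by standard quotient-topology manipulations.

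\emph{Closedness of $\sim$.} Suppose $x_n\to x$ and $y_n\to y$ with $x_n\sim y_n$. For each $n$, pick an $\expc$-stable continuum $C_n$ containing $x_n$ and $y_n$. The hyperspace of subcontinua of $M$ is compact in the Hausdorff metric (see \cite{Nadler}), so after passing to a subsequence $C_n\to C$ for some continuum $C\ni x,y$. Since $M$ is compact, each iterate $f^k$ is uniformly continuous, hence $f^k(C_n)\to f^k(C)$ in the Hausdorff metric and $\diam f^k(C)=\lim_n\diam f^k(C_n)\leq\expc$ for every $k\in\Z$. Therefore $C$ is $\expc$-stable and witnesses $x\sim y$.

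\emph{Proof of (2).} If $\hat U$ were not open, there would be $x\in\hat U$ and $x_n\to x$ with $x_n\notin\hat U$; picking $y_n\in[x_n]\setminus U$ and applying the previous argument yields an $\expc$-stable continuum through $x$ and some $y\in M\setminus U$, contradicting $[x]\subseteq U$. The set $\hat U$ is saturated by its very definition, so $q^{-1}(q(\hat U))=\hat U$ is open, and $q(\hat U)$ is therefore open in $\tilde M$ by definition of the quotient topology.

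\emph{Proof of (1).} The space $\tilde M$ is compact as a continuous image of $M$. Given distinct classes $[x]\neq[y]$, normality of $M$ furnishes disjoint open sets $U\supseteq[x]$, $V\supseteq[y]$, and then $q(\hat U)$ and $q(\hat V)$ are disjoint open neighborhoods separating the classes (if $[z]\in q(\hat U)\cap q(\hat V)$ then $[z]\subseteq U\cap V=\varnothing$, impossible), so $\tilde M$ is Hausdorff. For second countability, take a countable base $\mathcal B$ of $M$ and let $\mathcal B'$ be the countable collection of finite unions of elements of $\mathcal B$. Given $[x]$ in an open set $W\subseteq\tilde M$, the open set $q^{-1}(W)$ covers the compact class $[x]$, so $[x]\subseteq V\subseteq q^{-1}(W)$ for some $V\in\mathcal B'$, and hence $[x]\in q(\hat V)\subseteq W$. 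Thus $\{q(\hat V):V\in\mathcal B'\}$ is a countable base of $\tilde M$, and Urysohn's metrization theorem delivers metrizability.

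The main obstacle is the Hausdorff-limit step in the closedness argument: one must confirm that the diameter bound on every iterate survives passage to the limit. This is where the compactness of the hyperspace of subcontinua and the uniform continuity of the $f^k$ combine. The half cw-expansivity hypothesis itself has already been used upstream, to ensure that $\sim$ is genuinely transitive via the identification of $\expc$-stable and $\expc/2$-stable continua (Remark \ref{rmk:maxEstCont}); the present lemma is then a purely topological consequence of that fact together with compactness.
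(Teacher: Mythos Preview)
Your proof is correct and shares its essential core with the paper's: both hinge on the Hausdorff-limit argument showing that a limit of $\expc$-stable continua is again $\expc$-stable, which yields closedness of $\sim$ (equivalently, upper semi-continuity of the decomposition).

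The packaging differs in two minor ways. First, the paper invokes \cite{Nadler}*{Theorem 3.9} to pass directly from upper semi-continuity to metrizability of $\tilde M$, whereas you give a self-contained argument via Urysohn (compact, Hausdorff, second countable). Your route is more elementary and avoids an external citation; the paper's is shorter. Second, the paper proves (1) first and then deduces (2) from the identity $\hat U=M\setminus q^{-1}\bigl(q(M\setminus U)\bigr)$ together with the fact that $q$ is a closed map (which needs $\tilde M$ Hausdorff, hence uses (1)); you reverse the order, proving (2) directly from closedness of $\sim$ and then using (2) to establish Hausdorffness and second countability for (1). Both orderings are valid. Your closedness argument is in fact slightly cleaner than the paper's upper semi-continuity argument, since you only need $x\sim y$ from the limit continuum $C$, while the paper takes the extra step of observing that $C\cup[x]$ is $\expc$-stable to land $y$ inside $[x]$.
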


\begin{proof}

(1) First observe that, as the classes are closed by Lemma \ref{lem:maxEstCont}, the upper semi-continuity of the decomposition of $M$ into equivalence classes amounts to: given $\epsilon>0$, and a convergent sequence $x_k\to x$ of $M$, there exists $k_0\in\N$ such that $[x_k]\subseteq_{\epsilon}[x]$ for all $k\geq k_0$. Here for subsets $A,B\subseteq M$, $A\subseteq_{\epsilon}B$ means that $d(a,B)<\epsilon$ for all $a\in A$, where $d(a,B)=\inf_{b\in B}d(a,b)$.

Arguing by contradiction suppose that this is not the case.
Then there exist an $\epsilon_0>0$, a convergent sequence $x_k\to x$ and a sequence $(y_k)_{k\in\N}$ such that $x_k\sim y_k$ and $d(y_k,[x])\geq\epsilon_0$ for all $k\in\N$.
Let $(C_k)_{k\in\N}$ be a sequence of $\expc$-stable continua such that $x_k,y_k\in C_k$ for all $k\in\N$.
As $M$ is compact, taking subsequences we may suppose that $(y_k)_{k\in\N}$ converges to an element $y$, which will satisfy $d(y,[x])\geq\epsilon_0$, and that $(C_k)_{k\in\N}$ converges in the Hausdorff metric to a continuum $C\subseteq M$ which will be $\expc$-stable. As $x\in C\cap[x]$ and both sets are in fact $\expc/2$-stable continua, we see that $C\cup[x]$ is a $\expc$-stable continuum and $x,y\in C\cup[x]$. Therefore $x\sim y$, or equivalently $y\in[x]$, which contradicts $d(y,[x])\geq\epsilon_0$.

(2) It follows form (1) and \cite{Nadler}*{Proposition 3.7}.

(3) It follows form (1) and \cite{Nadler}*{Theorem 3.9}.

(4) Note that $\hat U=M\setminus q^{-1}\bigl(q(M\setminus U)\bigr)$. Then, as $M\setminus U$ is closed and $q$ is a continuous and closed map we see that $\hat U$ is open. Finally $q(\hat U)$ is open because $q^{-1}\bigl(q(\hat U)\bigr)=\hat U$ is open.
\end{proof}

\begin{thm}\label{teo:cwexpgordo<=>extecwxp} Let $M$ be a compact metrizable space and $f\colon M\to M$ a monotone extension of $\tilde f\colon\tilde M\to\tilde M$ 
with quotient map $q\colon M\to\tilde M$.
The following statements are equivalent:
\begin{enumerate}
 \item $\tilde f$ is cw-expansive,
 \item $f$ is half cw-expansive with respect to a suitable compatible metric, with constant $\expc>0$, 
 and the extension induced by $f$ (as in Definition \ref{def:releqiv}) is precisely the given extension (that is,
 $q(x)=q(y)$ if and only if there is a $\expc$-stable continuum containing $x$ and $y$).
\end{enumerate}
%
\end{thm}

\begin{proof}
As we said before we may suppose that the extension comes from a suitable equivalence relation on $M$ compatible with $f$ which we call $\simeq$.
The class of $x\in M$ by $\simeq$ will be denoted as $[x]=q(x)$ where $q\colon M\to\tilde M$ is the canonical map.

($1\Rightarrow 2$). We need to show a compatible metric on $M$ and $\expc>0$ such that $f$ is half cw\hyph expansive relative to $\expc$, and such that the equivalence relation $\sim$ of Definition \ref{def:releqiv} coincides with $\simeq$.

Pick a compatible metric $\dist_1$ for $M$ and a compatible metric $\dist_2$ for $\tilde M$. As $\tilde f$ is cw\hyph expansive, by Lemma \ref{lem:cwexptop<=>cwexpmet} there exists a cw\hyph expansivity constant $\expc$ for $\tilde f$ with respect to $\dist_2$. Let $D_1$ be the diameter of $M$ with respect to $\dist_1$, $K=\expc/(1+2D_1)$ and define a new metric $\dist$ on $M$ by
$$\dist_3(x,y)=K\dist_1(x,y)+\dist_2([x],[y]),\qquad x,y\in M.$$
To prove that $\dist_3$ is compatible with $\dist_1$ first note that
$K\dist_1\leq\dist_3$.
On the other hand, by the uniform continuity of $q\colon (M,\dist_1)\to(\tilde M,\dist_2)$, it is easy to see that given any $\epsilon>0$ there exists $\delta>0$ such that $\dist_1(x,y)<\delta$ implies $\dist_3(x,y)<\epsilon$, for all $x,y\in M$.
Therefore $\dist_3$ is compatible.

We will show that $f$ is half cw\hyph expansive with respect to $\dist_3$ and $\expc$.
Note that, if $x,y\in M$ and $x\simeq y$ then
$$\dist_3(x,y)=K\dist_1(x,y)\leq KD_1\leq\expc/2.$$ Hence $\diam_3{}[x]\leq\expc/2$ for each $[x]\in\tilde M$, where $\diam_j$ stands for the diameter in the metric $\dist_j$.
Note also that as $\dist_2([x],[y])\leq\dist_3(x,y)$ for all $x,y\in M$ we have $\diam_2 q(C)\leq\diam_3 C$ for all $C\subseteq M$.
Suppose that $\diam_3 f^n(C)\leq\expc$ for all $n\in\Z$, hence $\diam_2 \tilde f^n\bigl(q(C)\bigr)=\diam_2 q\bigl(f^n(C)\bigr)\leq\diam_3 f^n(C)\leq\expc$ for all $n\in\Z$.
Since $\expc$ is a cw\hyph expansivity constant for $\tilde f$ we have that
$q(C)=[x]$ for some $x\in M$. Thus, $C\subseteq[x]$ and  $\diam_3 C\leq\diam_3{}[x]\leq\expc/2$.

Finally we prove that $\sim{=}\simeq$. If $x\sim y$ then there exists a $\expc$-stable continuum $C$ containing $x$ and $y$. In the previous paragraph we already showed that in this case $C$ is a subset of a single class (relative to $\simeq$), therefore $x\simeq y$. Conversely, take $x\simeq y$ and let $C=[x]$.
We know that $C$ is connected because the extension is assumed to be monotone. Then $x,y\in C$ and
$$\diam_3 f^n(C)=\diam_3[f^n(x)]\leq\expc/2$$ for all $n\in\Z$, hence $x\sim y$.

($2 \Rightarrow 1$). By Lemma \ref{lem:cociente_metrizable} we know that $\tilde M$ is a compact metrizable space.
To prove that $\tilde f$ is cw\hyph expansive, by Lemma \ref{lem:cwexptop<=>cwexpmet}, it suffices to show a cw\hyph expansivity cover for $\tilde f$.
We assume that the extension is induced by $\sim$,
the equivalence relation of Definition \ref{def:releqiv}.

Let $\dist$ be a compatible metric on $M$ and $\expc$ a half cw\hyph expansivity constant for $f$ with respect to $\dist$.
Consider $\alpha>\expc$, from Theorem \ref{prop:unifhalfcw},
such that every $\alpha$-stable continuum is $\expc/2$-stable.
For each $x\in M$ let $U(x)=B_{\alpha/2}(x)$ be the open ball of radius $\alpha/2$ centered at $x$, and consider
$$\hat U(x)=\{y\in M:[y]\subseteq U(x)\}.$$
By Lemma \ref{lem:cociente_metrizable} we know that $\hat U(x)$ is open, and, as $\diam{}[x] \leq \expc/2<\alpha/2$, we have that $[x]\subseteq U(x)$, so $x\in\hat U(x)$. Then, again by Lemma \ref{lem:cociente_metrizable}, $q\bigl(\hat U(x)\bigr)$ is an open neighborhood of $[x]$ in $\tilde M$ for all $x\in M$.
Consider the open cover of $\tilde M$ given by $\tilde\U=\bigl\{q\bigl(\hat U(x)\bigr):x\in M\bigr\}$.
To prove that $\tilde\U$ is a cw\hyph expansivity cover for $\tilde f$
suppose that $\tilde f^n(\tilde C)\subseteq q\bigl(\hat U(x_n)\bigr)$
for all $n\in\Z$ where $\tilde C\subseteq\tilde M$ is a continuum and $x_n\in M$.

Let $C=q^{-1}(\tilde C)$ and note that $f^n(C)\subseteq U(x_n)$ for all $n\in\Z$, because $q\bigl(f^n(C)\bigr)=\tilde f^n(\tilde C)\subseteq q\bigl(\hat U(x_n)\bigr)$ for all $n\in\Z$. Thus $C$ is a $\alpha$-stable set.
Since $q$ is monotone, by Remark \ref{rmkMonCOnj} we have that $C$ is connected, and therefore it must be $\expc$-stable. Hence, $C$ reduces to a single class, from which we conclude that $\tilde C$ is a trivial continuum. This proves that $\tilde\U$ is a cw\hyph expansivity cover for $\tilde f$, and therefore $\tilde f$ is cw\hyph expansive.
\end{proof}

\section{Monotone quotients on surfaces}
\label{secMetSp}

In this section we will consider the extensions of \S\ref{secExtCwExpHom}
for homeomorphisms of surfaces.
In \S\ref{secClosedSurf} we show that under certain conditions the quotient space of a closed surface is homeomorphic to the original surface.
In \S\ref{secSurfWBdryCircle} we consider the existence problem of half cw-expansive homeomorphisms on surfaces with boundary.

\subsection{Closed surfaces}
\label{secClosedSurf}

Here we prove Theorem \ref{Teo B} which is the main result of this subsection, see Theorem \ref{thm:tildeM=M}. We show that for a closed surface $M$ any half cw-expansive homeomorphism with a sufficiently small half cw-expansivity constant induces a quotient space $\tilde M$ homeomorphic to $M$. In order to prove Theorem \ref{thm:tildeM=M} we introduce Proposition \ref{prop:tildeM=M} which is a generalization, to arbitrary dimension, of what we need for surfaces.

In the sequel we denote as $b_r(C)$ the $r$-dimensional Betti number\footnote{i.e., the dimension of the $r^\text{th}$ homology group of $C$.} modulo 2
of the set $C$.

\begin{lem}\label{lem:betti0}
If $C\subseteq \R^n$ is a compact subset that does not separate $\R^n$ then $b_{n-1}(C)=0$.
\end{lem}
\begin{proof} Let $U=\R^n\setminus C$. Consider the $n$-sphere ${\mathbb S}^n$ as the one-point compactification of $\R^n$, ${\mathbb S}^n=\R^n\cup\{\infty\}$, and let $V=U\cup\{\infty\}$, which is an open and connected subset of ${\mathbb S}^n$ as can be easily seen. Then, as $b_{n-1}({\mathbb S}^n\setminus V)=0$, by \cite{Wilder}*{Theorem 5.25}, and $C=\R^n\setminus U={\mathbb S}^n\setminus V$, we conclude that $b_{n-1}(C)=0$.
\end{proof}

%

\begin{prop}\label{prop:tildeM=M}
Suppose that $M$ is a closed Riemannian manifold, $\dim M=d\geq 2$.
If $f\colon M\to M$ is a homeomorphism then there is
$\epsilon_0>0$ such that if $C\subseteq M$ is a maximal $\epsilon$-stable continuum with $0<\epsilon\leq\epsilon_0$
then $b_{d-1}(C)=0$.
\end{prop}

\begin{proof}
Let $\epsilon_0>0$ be a small constant so that if $A\subseteq M$ and $\diam A\leq\epsilon_0$
then there is a convex open ball $D\subseteq M$ containing $A$.
Suppose that $C\subseteq M$ is a maximal $\epsilon$-stable continuum
for some $0<\epsilon\leq\epsilon_0$.
For each $n\in\Z$ let $D_n$ be a convex open disc containing $f^n(C)$.
A connected component of $M\setminus f^n(C)$ will be called \emph{bounded component} if their closure is disjoint from the boundary of $D$.
It is clear that we can take $\epsilon_0>0$ sufficiently small so that $f$ preserves
the bounded components $M\setminus f^n(C)$.

To conclude that $b_{d-1}(C)=0$ we will apply Lemma \ref{lem:betti0} to $D$, which is homeomorphic to $\R^d$.
Arguing by contradiction suppose that $D\setminus C$ is disconnected.
Take $y\in D\setminus C$ in a bounded component.
Let $\gamma\subseteq D$ be a geodesic arc through $y$ with its extreme points in $\partial D$.
The point $y$ separates $\gamma$ in two arcs $\gamma_1$ and $\gamma_2$.
As $y$ is in a bounded component of $D\setminus C$ we can take $z_i\in\gamma_i\cap C$ for $i=1,2$.
Since $\diam C\leq\epsilon$ we have that $\dist(z_1,z_2)\leq\epsilon$.
This implies that $\dist(y,C)\leq\epsilon$ because $D$ is convex.
An analogous argument for each $f^n(C)\subseteq D_n$ shows that $\dist\bigl(f^n(y),f^n(C)\bigr)\leq\epsilon$ for all
$n\in\Z$.
Let $V$ be the \emph{unbounded} component of $D\setminus C$ and denote by $C'$
the continuum $D\setminus V$.
We have proved that $\diam f^n(C')\leq\epsilon_0$ for all $n\in\Z$.
The maximality of $C$ implies that $C'=C$,
but this contradicts that the class $C$ separates $D$.
\end{proof}



We recall the following result of Algebraic Topology, which will be used in Theorem \ref{thm:tildeM=M}.

\begin{thm}[\cite{RoSt}*{Theorem 1}]\label{rmkRoSt}
If $M$ is a compact connected surface without boundary
and $\tilde M$ is the quotient space induced by an upper-semicontinuous decomposition of $M$ into continua that contains at least two elements, and $b_1(C) =0$ for each equivalence class $C\subseteq M$, then $\tilde M$ is homeomorphic to $M$.
\end{thm}

\begin{thm}\label{thm:tildeM=M}
If $M$ is a closed surface with a Riemannian metric,
then there is $\epsilon_0>0$ such that if
$f\colon M\to M$ is a half cw\hyph expansive homeomorphism with half cw\hyph expansivity constant $\expc\leq\epsilon_0$
then $\tilde M$ is homeomorphic to $M$.
\end{thm}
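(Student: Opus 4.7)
The plan is to reduce the statement directly to the Roberts--Steenrod theorem as recalled in Remark \ref{rmkRoSt}. That theorem requires: (i) $M$ is a compact connected surface without boundary, (ii) the decomposition of $M$ is monotone and upper semicontinuous, (iii) it has at least two elements, and (iv) every class $C$ satisfies $b_1(C)=0$. Hypothesis (i) is already built into the statement (a closed surface, which we may assume connected by working component by component if necessary). So the proof reduces to verifying (ii), (iii), and (iv) for the decomposition of $M$ into $\sim$-classes from Definition \ref{def:releqiv}, provided the constant $\expc$ is small enough.

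First I would fix the constant. Let $\epsilon_0$ be the minimum of the constant given by Lemma \ref{lem:tildeM=M} applied with $d=2$ and the diameter $\diam M$ of $M$. Assume $f$ is half cw-expansive with constant $\expc\le\epsilon_0$, and consider the equivalence relation $\sim$ from Definition \ref{def:releqiv}.

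Next I would verify each hypothesis of Roberts--Steenrod in turn. For monotonicity and upper semicontinuity (ii): by Remark \ref{rmk:maxEstCont} the classes are maximal $\expc$-stable continua, hence connected, so the decomposition is monotone; upper semicontinuity of this decomposition is exactly what is established in the proof of part (1) of Lemma \ref{lem:cociente_metrizable}. For (iii): since every $\expc$-stable continuum has diameter at most $\expc/2\le\epsilon_0/2<\diam M$, no class equals $M$, so there are at least two classes. For (iv): since each class is a maximal $\expc$-stable continuum with $\expc\le\epsilon_0$, Lemma \ref{lem:tildeM=M} applied with $d=2$ yields $b_1(C)=0$ for every class $C$.

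With the three hypotheses verified, Roberts--Steenrod (as stated in Remark \ref{rmkRoSt}) gives that $\tilde M$ is homeomorphic to $M$. There is essentially no hard step left: all the real topological content has been pushed into the preceding lemma (Lemma \ref{lem:tildeM=M}) and the cited Roberts--Steenrod theorem. The only point requiring any care is to make sure the same $\epsilon_0$ simultaneously forces the Betti number vanishing and prevents the decomposition from collapsing to a single class, which is handled by taking $\epsilon_0$ to also be smaller than $\diam M$.
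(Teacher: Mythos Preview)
Your proposal is correct and follows essentially the same approach as the paper: both reduce to Roberts--Steenrod via Lemma \ref{lem:tildeM=M} and Remark \ref{rmk:maxEstCont}, exactly as outlined in Remark \ref{rmkRoSt}. You are simply more explicit than the paper in separately verifying upper semicontinuity (via Lemma \ref{lem:cociente_metrizable}) and the existence of at least two classes (by shrinking $\epsilon_0$ below $\diam M$), both of which the paper leaves implicit in its references to those earlier results.
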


\begin{proof}
Take $\epsilon_0$ from Proposition \ref{prop:tildeM=M}. Suppose that $f$ is half cw\hyph expansive with constant $\expc\leq\epsilon_0$.
By Lemma \ref{lem:maxEstCont} we know that the maximal $\expc$-stable continua are the equivalence classes that defines $\tilde M$.
Since $\dim M=2$ we have that $b_1(C)=0$ for every class $C\subseteq M$. 
Then the result follows by Theorem \ref{rmkRoSt}.
\end{proof}

For reference in \S \ref{secPartPert} we state the following direct consequence.

\begin{cor}\label{coroToro}
Let $M$ be a closed surface with a Riemannian metric and $f_0\colon M\to M$ be a cw\hyph expansive homeomorphism. Then there exists $\epsilon_1>0$ such that for every $0<\expc\leq\epsilon_1$ there is
a $C^0$-neighborhood $\U$ of $f_0$ such that every $f\in\U$ is half cw\hyph expansive with constant $\expc$ and
$\tilde M$ is homeomorphic to $M$.
\end{cor}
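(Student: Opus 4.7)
The plan is to combine Corollary \ref{cor:entornohalfcw} (persistence of half cw-expansivity under $C^0$-perturbations of a cw-expansive map) with Theorem \ref{thm:tildeM=M} (identification of the monotone quotient with $M$ for small half cw-expansivity constants). The only slightly delicate point is selecting a single threshold $\epsilon_1$ that works uniformly for every $\expc\in(0,\epsilon_1]$.

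First I would fix a cw-expansivity constant $\expc_0$ for $f_0$ and the constant $\epsilon_0$ provided by Theorem \ref{thm:tildeM=M} (equivalently, by Lemma \ref{lem:tildeM=M}); then set $\epsilon_1=\min\{\expc_0,\epsilon_0\}$. A brief observation to record is that any positive number smaller than a cw-expansivity constant is itself a cw-expansivity constant, since an $\expc$-stable continuum with $\expc\leq\expc_0$ is a fortiori $\expc_0$-stable. Consequently, for each $0<\expc\leq\epsilon_1$ the number $\expc$ is a cw-expansivity constant for $f_0$.

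Next, applying Corollary \ref{cor:entornohalfcw} to $f_0$ with the constant $\expc$, I obtain a $C^0$-neighborhood $\U=\U(\expc)$ of $f_0$ such that every $f\in\U$ is half cw-expansive with constant $\expc$. Since $\expc\leq\epsilon_0$, Theorem \ref{thm:tildeM=M} applies to each such $f$, yielding that its associated quotient $\tilde M$ is homeomorphic to $M$. Assembling these facts gives the required neighborhood, and hence the corollary.

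The proof is essentially bookkeeping; I do not expect any genuine obstacle, and in particular no new geometric or topological input beyond what is already contained in the two invoked results. The only thing worth being careful about is ordering the quantifiers correctly: the neighborhood $\U$ is allowed to depend on the chosen $\expc$, which matches the statement ``for every $0<\expc\leq\epsilon_1$ there is a $C^0$-neighborhood $\U$''.
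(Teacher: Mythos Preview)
Your proposal is correct and follows essentially the same approach as the paper: define $\epsilon_1=\min\{\epsilon_0,\expc_0\}$, observe that any $0<\expc\leq\epsilon_1$ is itself a cw-expansivity constant for $f_0$, apply Corollary \ref{cor:entornohalfcw} to obtain the neighborhood $\U$, and then invoke Theorem \ref{thm:tildeM=M} since $\expc\leq\epsilon_0$. The paper's proof is virtually identical, including the same choice of $\epsilon_1$ and the same sequence of citations.
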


\begin{proof}
Take $\epsilon_0$ from Theorem \ref{thm:tildeM=M}, let $\expc_0$ be a cw\hyph expansivity constant for $f_0$ and define $\epsilon_1=\min\{\epsilon_0,\expc_0\}$. Then, for any $0<\expc\leq\epsilon_1$ we have that $\expc$ is a cw\hyph expansivity constant for $f_0$.  By Corollary \ref{cor:entornohalfcw} there exists a $C^0$-neighborhood $\U$ of $f_0$ such that all $f\in\U$ are half cw\hyph expansive with constant $\expc$. Finally, as $\expc\leq\epsilon_0$, we have that $\tilde M$ is homeomorphic to $M$ for every $f\in\U$.
\end{proof}

For the proof of the next result we recall some known facts.
Given two compact metric spaces $M,N$ and a continuous map $q\colon M\to N$, we say that $q$ is a \emph{near-homeomorphism}
\cite{Daverman}*{p. 27}
if every $C^0$-neighborhood of $q$ contains a homeomorphism from $M$ to $N$.

By Corollary \ref{cor:entornohalfcw} we know that in a $C^0$-neighborhood of a cw\hyph expansive homeomorphism
every homeomorphism is half cw\hyph expansive. The next result is some kind of converse for surfaces.
That is, in a suitable neighborhood of a half cw\hyph expansive homeomorphism there is a cw\hyph expansive homeomorphism.
The size of this neighborhood depends on the half cw\hyph expansivity constant.

\begin{thm}
If $f\colon M\to M$ is a half cw\hyph expansive homeomorphism
of a closed surface with a Riemannian metric, with constant $\expc\leq\epsilon_0$ (where $\epsilon_0$ is given by Theorem \ref{thm:tildeM=M}) and $\epsilon>0$ is given, then there is a cw\hyph expansive
homeomorphism $g\colon M\to M$ conjugate with $\tilde f$ such that $\dist_{C^0}(f,g)<\expc/2+\epsilon$.
\end{thm}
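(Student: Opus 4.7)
The plan is to choose a homeomorphism $h\colon M\to\tilde M$ that closely approximates the canonical projection $q$, define $g=h^{-1}\circ\tilde f\circ h$, and estimate $\dist_{C^0}(f,g)$ using the bound $\diam[x]\leq\expc/2$ on the classes (Remark \ref{rmk:maxEstCont}). Theorem \ref{thm:tildeM=M} provides the homeomorphism type $\tilde M\cong M$. The key additional ingredient is the near-homeomorphism property of $q$: for every $\eta>0$ there exists a homeomorphism $h\colon M\to\tilde M$ such that
\[
\dist_M\bigl(x,q^{-1}(h(x))\bigr)<\eta
\]
for all $x\in M$; equivalently, $h$ coincides with $q$ after moving each point within the $\eta$-neighborhood of its own class. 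This is the Moore/Roberts-Steenrod theorem in its quantitative form.

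Given such $h$, the map $g=h^{-1}\circ\tilde f\circ h$ is a homeomorphism of $M$ conjugate to $\tilde f$, hence cw-expansive by Theorem \ref{teo:cwexpgordo<=>extecwxp}. To estimate $\dist_M(f(x),g(x))$ I fix any compatible metric $\dist_{\tilde M}$ on $\tilde M$ (which exists by Lemma \ref{lem:cociente_metrizable}) and proceed by a double application of uniform continuity. First, using the uniform continuity of $q$ with modulus $\omega_q$, the near-homeomorphism inequality yields $\dist_{\tilde M}(h(x),q(x))<\omega_q(\eta)$, so $q\circ h^{-1}$ is $\omega_q(\eta)$-close to the identity of $\tilde M$. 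Combining this with $q\circ f=\tilde f\circ q$ and the uniform continuity of $\tilde f$ (modulus $\omega_{\tilde f}$) gives
\[
\dist_{\tilde M}\bigl(q(g(x)),q(f(x))\bigr)<\omega_q(\eta)+\omega_{\tilde f}\bigl(\omega_q(\eta)\bigr).
\]
Since $q$ is a continuous surjection from a compact space, the set-valued map $\tilde a\mapsto q^{-1}(\tilde a)$ is upper semicontinuous, so the above $\tilde M$-bound translates into $\dist_M(g(x),[f(x)])<\psi(\eta)$ for some $\psi$ with $\psi(\eta)\to 0$ as $\eta\to 0$. Using $\diam[f(x)]\leq\expc/2$, this gives $\dist_M(f(x),g(x))\leq\expc/2+\psi(\eta)$; choosing $\eta$ small enough that $\psi(\eta)<\epsilon$ yields $\dist_{C^0}(f,g)<\expc/2+\epsilon$.

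The main obstacle is establishing the quantitative near-homeomorphism statement above, since Theorem \ref{thm:tildeM=M} only provides an abstract homeomorphism $\tilde M\cong M$. This refinement follows from the Moore/Roberts-Steenrod construction itself, in which the homeomorphism $\tilde M\cong M$ is built as a $C^0$-limit of approximate homeomorphisms whose fibers shrink to the decomposition; equivalently, one can invoke general near-homeomorphism theorems for monotone cell-like decompositions of closed surfaces (see \cite{Daverman}).
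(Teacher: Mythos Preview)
Your overall strategy coincides with the paper's: produce a homeomorphism $h\colon M\to\tilde M$ approximating the quotient map $q$, set $g=h^{-1}\circ\tilde f\circ h$, and exploit $\diam[x]\leq\expc/2$. The gap is in the step where you translate the $\tilde M$-estimate back to $M$ via upper semicontinuity. Upper semicontinuity of $\tilde a\mapsto q^{-1}(\tilde a)$ only says that for each fixed $\tilde b$ and each $\psi>0$ there is $\rho=\rho(\tilde b,\psi)>0$ with $q^{-1}\bigl(B_\rho(\tilde b)\bigr)\subseteq B_\psi\bigl(q^{-1}(\tilde b)\bigr)$; the $\rho$ need not be uniform in $\tilde b$, so no function $\psi(\eta)\to 0$ is produced. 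A concrete obstruction: on $M=[0,1]$ collapse $[1/4,3/4]$ to a point. With $y=3/4$ and $\tilde b=q(1/4-\delta)$ one has $\dist_{\tilde M}\bigl(q(y),\tilde b\bigr)=\delta\to 0$ while $\dist_M\bigl(y,q^{-1}(\tilde b)\bigr)=1/2+\delta$ stays bounded away from $0$. Thus the implication ``$\dist_{\tilde M}\bigl(q(g(x)),q(f(x))\bigr)$ small $\Rightarrow$ $\dist_M\bigl(g(x),[f(x)]\bigr)$ small uniformly'' is false in general.

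The paper circumvents this by a sequential compactness argument rather than a direct uniform estimate: take homeomorphisms $h_n\to q$ in the $C^0$-metric of $\tilde M$, set $g_n=h_n^{-1}\circ\tilde f\circ h_n$, and suppose $\dist\bigl(f(x_n),g_n(x_n)\bigr)\geq\expc/2+\epsilon$ for all $n$. Passing to subsequences with $x_n\to x$ and $g_n(x_n)\to y$, one gets $q(y)=\lim h_n\bigl(g_n(x_n)\bigr)=\lim\tilde f\bigl(h_n(x_n)\bigr)=\tilde f\bigl(q(x)\bigr)=q\bigl(f(x)\bigr)$, so $y,f(x)\in[f(x)]$ and $\dist\bigl(y,f(x)\bigr)\leq\expc/2$, a contradiction. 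No uniform inverse-image estimate is needed.

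Incidentally, your own $M$-side formulation of the near-homeomorphism property already yields a direct estimate, bypassing the detour through $\tilde M$: applied at $x$ it gives $x'$ with $q(x')=h(x)$ and $\dist_M(x,x')<\eta$; applied at $g(x)$ it gives $w\in q^{-1}\bigl(h(g(x))\bigr)=q^{-1}\bigl(\tilde f(q(x'))\bigr)=[f(x')]$ with $\dist_M\bigl(g(x),w\bigr)<\eta$. Then
\[
\dist_M\bigl(g(x),f(x)\bigr)\leq \dist_M\bigl(g(x),w\bigr)+\diam[f(x')]+\dist_M\bigl(f(x'),f(x)\bigr)<\eta+\expc/2+\omega_f(\eta),
\]
and choosing $\eta$ so that $\eta+\omega_f(\eta)<\epsilon$ finishes the proof.
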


\begin{proof}
It is known that the quotient map $q\colon M\to\tilde M$ is a near-homeomorphism.
We sketch the proof for reader's convenience.
From Proposition \ref{prop:tildeM=M} and \cite{Kur}*{p. 514 Thm 6} we know that the equivalence classes are cell-like.
By \cite{Daverman}*{p. 187} a cell-like decomposition is shrinkable.
On compact metric spaces the shrinkability condition implies that the quotient map is a near-homeomorphism \cite{Daverman}.

Let $h_n\colon M\to\tilde M$ be a sequence of homeomorphisms converging to $q$ in the $C^0$-metric.
Define $g_n=h_n^{-1}\circ\tilde f\circ h_n$.
We know that $g_n$ is cw\hyph expansive because $\tilde f$ is cw\hyph expansive and they are conjugate.
Suppose that there are $\epsilon>0$ and $x_n$ such that
\begin{equation}
 \label{ecuCasiHomeo}
 \dist\bigl(f(x_n),h_n^{-1}\circ\tilde f\circ h_n(x_n)\bigr)\geq\expc/2+\epsilon
\end{equation}
for all $n\in\N$.
If $x$ is a limit point of $x_n$ then $\tilde f\circ h_n(x_n)\to \tilde f\circ q(x)$.
If $y$ is a limit point of $h_n^{-1}\circ\tilde f\circ q(x)$ then
$q(y)=\tilde f\bigl(q(x)\bigr)$.
Since $q\circ f=\tilde f\circ q$
we have that $q(y)=q(f(x))$.
We know that $\diam q^{-1}(z)\leq\expc/2$ for all $z\in\tilde M$.
Then, $\dist(y,f(x))\leq\expc/2$.
This contradicts \eqref{ecuCasiHomeo} and proves that for some $n$ it holds that $\dist_{C^0}(f,g_n)<\expc/2+\epsilon$.
\end{proof}

\subsection{Surfaces with boundary}
\label{secSurfWBdryCircle}
In this subsection we prove Theorem \ref{Teo C}.
We recall that surfaces with boundary do not admit cw\hyph expansive homeomorphisms.
This is a consequence of the non-existence of such homeomorphisms on the circle.
See for example \cite{ArDend}*{Remark 2.3.6}.
Also recall that $\expc>2\diam M$, is a trivial half cw\hyph expansive constant, see Remark \ref{rmkCteTrivial}.

\begin{prop}
\label{propNoCirc}
The circle only admits trivial half cw\hyph expansive homeomorphisms.
\end{prop}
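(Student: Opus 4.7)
The plan is to argue by contradiction using the monotone quotient construction of \S\ref{subMonExt}. Suppose $f\colon\Suno\to\Suno$ is half cw-expansive with some constant $\expc<2\diam\Suno$; I would form the equivalence relation $\sim$ of Definition \ref{def:releqiv} together with the canonical quotient $q\colon\Suno\to\tilde\Suno$ and the induced homeomorphism $\tilde f$. By Remark \ref{rmk:maxEstCont}, each equivalence class is a maximal $\expc$-stable continuum, hence a compact connected subset of $\Suno$ of diameter at most $\expc/2<\diam\Suno$. In particular, no class equals the whole circle, so $\tilde\Suno$ has at least two points and every class is either a singleton or a proper closed sub-arc of $\Suno$.

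The crux is to identify $\tilde\Suno$ with $\Suno$. By Lemma \ref{lem:cociente_metrizable}(1) the decomposition is upper semi-continuous, and pulling it back through the universal cover $\pi\colon\R\to\Suno$ yields a $\Z$-equivariant upper semi-continuous decomposition of $\R$ into points and compact intervals. A standard argument, exploiting that the resulting quotient inherits a natural linear order from $\R$ compatible with the quotient topology, shows that this quotient is itself homeomorphic to $\R$; passing to $\Z$-orbits then yields the desired identification $\tilde\Suno\cong\Suno$.

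Finally, Theorem \ref{teo:cwexpgordo<=>extecwxp} says that $\tilde f\colon\tilde\Suno\to\tilde\Suno$ is cw-expansive, producing a cw-expansive homeomorphism of the circle and contradicting the fact recalled at the top of \S\ref{secSurfWBdryCircle} (see \cite{ArDend}*{Remark 2.3.6}) that $\Suno$ admits no cw-expansive homeomorphism. The main obstacle is the step $\tilde\Suno\cong\Suno$: the arc-classes could form an infinite, accumulating family, so one really must exploit both the upper semi-continuity of the decomposition and the one-dimensional structure of $\Suno$; the linear-order argument on the universal cover seems the cleanest way to control this.
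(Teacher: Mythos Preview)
Your proposal is correct and follows essentially the same route as the paper: form the monotone quotient, apply Theorem \ref{teo:cwexpgordo<=>extecwxp} to get a cw-expansive $\tilde f$, identify the quotient space, and invoke the non-existence of cw-expansive homeomorphisms on $\Suno$. The only difference is one of economy: the paper simply asserts that a monotone quotient of $\Suno$ is either a circle or a singleton and then rules out the circle case, whereas you run the argument by contradiction and spend effort justifying $\tilde\Suno\cong\Suno$ via the universal cover and an order argument---work that is sound but not needed once the circle/singleton dichotomy for monotone images of $\Suno$ is taken as known.
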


\begin{proof}
Suppose that $f\colon M\to M$ is a half cw\hyph expansive homeomorphism of the circle $M=\Suno$ with constant $\expc$.
By Theorem \ref{teo:cwexpgordo<=>extecwxp} we have that $\tilde f\colon \tilde M\to\tilde M$ is cw\hyph expansive.
Since the canonical map is monotone we have that $\tilde M$ is a circle or a singleton.
As we said, it cannot be a circle. This implies that there is only one class, i.e., $f$ is trivially half cw\hyph expansive.
\end{proof}

The next two results show that the non-triviality of the
half cw\hyph expansivity depends on the compatible metric.

\begin{prop}
 \label{propHalfDepMet}
 Suppose that $D$ is homeomorphic to a 2-dimensional disk with a metric $\dist$ such that
 $\diam D=\diam\partial D$.
 Then $D$ admits no non-trivial half cw\hyph expansive homeomorphism.
\end{prop}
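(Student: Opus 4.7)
My plan is to reduce the problem to the circle case via the boundary. Let $f\colon D\to D$ be a half cw-expansive homeomorphism with constant $\expc$. Since $D$ is homeomorphic to a $2$-dimensional disk, the manifold boundary $\partial D$ is topologically a circle, and it is invariant under every self-homeomorphism of $D$ (boundary points are characterized intrinsically). So $f$ restricts to a homeomorphism $f|_{\partial D}\colon\partial D\to\partial D$.

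Next I would check that $f|_{\partial D}$ is half cw-expansive with the same constant $\expc$, relative to the metric $\dist$ restricted to $\partial D$. Suppose $C\subseteq\partial D$ is a continuum with $\diam f^n(C)\leq\expc$ for all $n\in\Z$, where diameters are measured with the restricted metric. Since $C\subseteq D$ as well and the diameter is the same in both metrics, $C$ is an $\expc$-stable continuum for $f$ on $D$, hence $\diam C\leq\expc/2$ by half cw-expansivity on $D$. So $f|_{\partial D}$ is half cw-expansive with constant $\expc$.

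By Proposition \ref{propNoCirc}, $f|_{\partial D}$ admits only trivial half cw-expansivity constants, so $\expc\geq 2\diam\partial D$. The hypothesis $\diam D=\diam\partial D$ then gives $\expc\geq 2\diam D$, i.e., $\expc$ is a trivial half cw-expansivity constant for $f$ in the sense of Remark \ref{rmkCteTrivial}.

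The main (minor) points to be careful about are invoking topological invariance of the manifold boundary, and confirming that the restriction of $\dist$ to $\partial D$ is a compatible metric on the circle so Proposition \ref{propNoCirc} applies verbatim; both are routine. The key conceptual step is that the equality $\diam D=\diam\partial D$ transfers the triviality conclusion from the boundary to the whole disk.
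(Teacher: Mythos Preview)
Your proposal is correct and follows essentially the same approach as the paper: restrict $f$ to the invariant boundary circle, observe that $\expc$ remains a half cw-expansivity constant there, and invoke Proposition~\ref{propNoCirc} together with the hypothesis $\diam D=\diam\partial D$ to force $\expc\geq 2\diam D$. The paper's proof is phrased contrapositively (assume $\expc<2\diam D$ and derive a contradiction) but is otherwise identical in content.
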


\begin{proof}
 If $\expc\leq2\diam D$ is a half cw\hyph expansivity constant for $f$
 then it is also a constant for $g=f|\partial D$.
 Then $g$ is a non-trivial half cw\hyph expansive homeomorphism of a circle, contradicting Proposition
 \ref{propNoCirc}.
\end{proof}

It is easy to see that for a disk embedded in the plane we have that
$\diam D=\diam\partial D$ with respect to the Euclidean metric.
Then, Proposition  \ref{propHalfDepMet} can be applied, for example, to the standard 2-disk $x^2+y^2\leq 1$ with the Euclidean metric.

\begin{prop}
\label{propHcwDisk}
 The closed 2-dimensional disk with a suitable metric admits a non-trivial half cw\hyph expansive homeomorphism.
\end{prop}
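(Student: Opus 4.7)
The plan is to realize the closed disk $D$ as a monotone extension of a cw-expansive homeomorphism of $\Sdos$ and then invoke Theorem \ref{teo:cwexpgordo<=>extecwxp}, which automatically supplies a compatible metric on $D$ making the extending homeomorphism half cw-expansive. Since the resulting quotient has more than one equivalence class, the metric on $\tilde M=\Sdos$ from the proof of Theorem \ref{teo:cwexpgordo<=>extecwxp} can be rescaled to ensure $\expc<2\diam D$, so that the half cw-expansivity is non-trivial.

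Concretely, I would take a cw-expansive homeomorphism $\tilde f\colon\Sdos\to\Sdos$ having a fixed point $p$; for instance, a pseudo-Anosov homeomorphism with four $1$-prong singularities as in \cite{ArDend}, with $p$ chosen to be one of these singularities. Realize $D$ as the blowup of $\Sdos$ at $p$: namely, a closed $2$-disk together with a continuous surjection $q\colon D\to\Sdos$ which restricts to a homeomorphism between $\interior D$ and $\Sdos\setminus\{p\}$ and collapses $\partial D$ to $\{p\}$. The only non-trivial fibre of $q$ is the connected circle $\partial D$, so $q$ is monotone. Lift $\tilde f$ to a homeomorphism $f\colon D\to D$ with $f(\partial D)=\partial D$ satisfying $q\circ f=\tilde f\circ q$. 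Then $(D,f)\stackrel{q}{\to}(\Sdos,\tilde f)$ is a monotone extension of a cw-expansive system, and Theorem \ref{teo:cwexpgordo<=>extecwxp} produces the desired metric and half cw-expansivity constant on $D$.

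The main obstacle is defining $f|_{\partial D}$: away from $\partial D$ the map is dictated by $\tilde f$, but we must prescribe the boundary dynamics so that $f$ is continuous across $\partial D$. When $p$ is a $1$-prong pseudo-Anosov singularity the local model at $p$ is a branched double cover of a standard hyperbolic linear map, and this structure induces a canonical homeomorphism on the circle of "directions" at $p$, which serves as $f|_{\partial D}$. Equivalently, one may identify $\partial D$ with the circle of prime ends of $\Sdos\setminus\{p\}$ and use the continuous extension of $\tilde f$ to the prime-end compactification. Once $f$ is in hand, the non-triviality of the half cw-expansivity constant follows by noting that in the proof of Theorem \ref{teo:cwexpgordo<=>extecwxp} one has $\diam_3 D\geq\diam_2\Sdos$, so choosing the metric $\dist_2$ on $\Sdos$ with $\diam_2\Sdos>\expc/2$ yields $\expc<2\diam_3 D$, as required.
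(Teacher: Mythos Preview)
Your strategy coincides with the paper's: collapse $\partial D$ to a fixed point $p$ of a cw-expansive homeomorphism of $\Sdos$, check that this is a monotone extension, and invoke Theorem~\ref{teo:cwexpgordo<=>extecwxp}. The only substantive difference is the choice of $p$. The paper takes a \emph{power} of the same pseudo-Anosov $g$ you have in mind, precisely so that $p$ can be chosen hyperbolic with a genuinely linear local model; then $q(r,\theta)=(r-1,\theta)$ in polar coordinates and the boundary extension is just the projectivized action of the linear map, which is transparently continuous. Your choice of a $1$-prong singularity also works (the quotient of $\mathrm{diag}(\lambda,\lambda^{-1})$ by $-\mathrm{Id}$ still induces a well-defined circle map on $S^1/\{\pm1\}\cong S^1$), but note that only one of the four singularities of $g$ is actually fixed, and your phrase ``branched double cover of a standard hyperbolic linear map'' has the covering direction reversed. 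Your explicit check of non-triviality via $\diam_3 D\geq\diam_2\Sdos$ is a nice addition; the paper leaves this implicit.

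One genuine slip: the prime-ends alternative does not work here. The prime-end compactification of $\Sdos\setminus\{p\}$, whose topological boundary is the single point $p$, has exactly one prime end, so it recovers $\Sdos$ rather than a closed disk. Prime ends give a circle only when the complement is a nondegenerate continuum. Drop that sentence and keep the directions-at-$p$ argument (or, more simply, follow the paper and pass to a power so the local model is literally linear).
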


\begin{proof}
Let $\tilde f\colon \Sdos\to\Sdos$ be a cw\hyph expansive homeomorphism of the 2-sphere with hyperbolic fixed points.
For example we can take a power of the homeomorphism that will be explained in \S\ref{secCwSh}.
Suppose that $\Sdos=\R^2\cup\{\infty\}$, the origin is a hyperbolic fixed point and $\tilde f$ is linear in a neighborhood of $(0,0)$.
Let $D=\{v\in\Sdos:\|v\|\geq 1\}$ be a disk in the sphere, where $\|\cdot\|$ is the Euclidean norm and
$\infty\in D$.
Consider $q\colon D\to\Sdos$ given by
$q(r,\theta)=(r-1,\theta)$ in polar coordinates.
Note that $q(\partial D)=(0,0)$
and that $q$ is injective in the interior of $D$.
Define $f\colon D\to D$ as $f(x)=q^{-1}(\tilde f(q(x)))$ for all $x\in D\setminus\partial D$.
Since $\tilde f$ is linear around $(0,0)$ the map $f$ can be continuously defined in $\partial D$ obtaining a homeomorphism $f\colon D\to D$.
As $f$ is a monotone extension of $\tilde f$ and $\tilde f$ is cw\hyph expansive, by Theorem \ref{teo:cwexpgordo<=>extecwxp} we conclude that there is a compatible metric on $D$ that
makes $f$ a half cw\hyph expansive homeomorphism.
\end{proof}

Note that the example of Proposition \ref{propHcwDisk} has a non-trivial class, i.e., $b_1(\partial D)\neq 0$
where $\partial D$ is a class of the equivalence relation of Definition \ref{def:releqiv}.
The next result generalizes this remark for an arbitrary plane Peano continuum (i.e., a locally connected subcontinuum of $\R^2$).
It depends on \cite{Kato93}*{Theorem 6.2} where Kato proved that no non-trivial plane Peano continuum admits a cw\hyph expansive homeomorphism.

\begin{prop}
 If $M\subset\R^2$ is a Peano continuum and $f\colon M\to M$ is non-trivially half cw\hyph expansive
 then there is a class $[x]\subset M$ with $b_1([x])\neq 0$.
\end{prop}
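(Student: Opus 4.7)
The plan is to argue by contradiction, ultimately invoking Kato's result \cite{Kato93}*{Theorem 6.2} that no non-trivial plane Peano continuum admits a cw-expansive homeomorphism. Suppose every class $[x]\subset M$ satisfies $b_1([x])=0$; I will derive a contradiction by exhibiting $\tilde f\colon\tilde M\to\tilde M$ as a cw-expansive homeomorphism of a non-trivial plane Peano continuum.

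First I would check that $\tilde M$ is a non-trivial Peano continuum: it is the continuous image of the Peano continuum $M$ (hence Peano), and compact metrizable by Lemma \ref{lem:cociente_metrizable}. For non-triviality, let $\expc<2\diam M$ be a half cw-expansivity constant; then $M$ itself cannot be $\expc$-stable (this would force $\diam M\leq\expc/2$), so at least two distinct equivalence classes exist. Theorem \ref{teo:cwexpgordo<=>extecwxp} then gives that $\tilde f$ is cw-expansive on $\tilde M$.

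The main technical step is to show that $\tilde M$ embeds in $\R^2$. For this I would extend the decomposition of $M$ into equivalence classes to a decomposition of $\R^2$ by keeping those classes and adding singletons $\{p\}$ for each $p\in\R^2\setminus M$. Since $M$ is closed in $\R^2$ and the decomposition of $M$ is upper semi-continuous (shown in the proof of Lemma \ref{lem:cociente_metrizable}), the extended decomposition is upper semi-continuous on $\R^2$. Each of its elements is a compact continuum that does not separate $\R^2$: for classes inside $M$ this is exactly the translation of $b_1([x])=0$ via Alexander duality, while singletons trivially do not separate. Moore's theorem \cite{Mo25} then yields that the quotient of $\R^2$ by this extended decomposition is homeomorphic to $\R^2$. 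Since $M$ is closed and saturated for the extended decomposition, $\tilde M$ embeds as a closed subspace of this quotient, hence embeds in $\R^2$. This produces a non-trivial plane Peano continuum carrying the cw-expansive homeomorphism $\tilde f$, contradicting \cite{Kato93}*{Theorem 6.2}.

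The main obstacle I anticipate is the extension step: carefully verifying that the extended decomposition of $\R^2$ is upper semi-continuous, and that the subspace topology on $\tilde M$ inside the quotient of $\R^2$ coincides with its own quotient topology. Both are standard consequences of $M$ being closed and saturated in $\R^2$, but they deserve careful attention since Moore's theorem as usually stated is for decompositions of the whole plane (or sphere) rather than of an arbitrary plane Peano continuum.
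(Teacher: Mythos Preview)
Your proof is correct and follows essentially the same route as the paper: argue by contradiction, show $\tilde M$ is a (non-trivial) Peano continuum with $\tilde f$ cw-expansive via Theorem \ref{teo:cwexpgordo<=>extecwxp}, extend the decomposition to all of $\R^2$ by singletons off $M$, apply Moore's theorem \cite{Mo25} to realize $\tilde M$ inside a plane, and then invoke Kato \cite{Kato93}*{Theorem 6.2}. Your version is in fact more careful than the paper's about the extension step (upper semi-continuity, the $b_1$/non-separation translation, and the subspace-versus-quotient topology on $\tilde M$), and you cite the Peano property of $\tilde M$ directly from the continuous image, whereas the paper quotes \cite{Nadler}*{Corollary 8.17}; but these are matters of detail, not of strategy.
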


\begin{proof}
Let $f\colon M\to M$ be a half cw\hyph expansive homeomorphism.
Suppose that $b_1([x])=0$ for all $x\in M$.
Let $\tilde M$ be the quotient space and $q\colon M\to\tilde M$ the canonical map.
By \cite{Nadler}*{Corollary 8.17} $\tilde M$ is a Peano continuum and by Theorem \ref{teo:cwexpgordo<=>extecwxp} $\tilde f$ is cw\hyph expansive on $\tilde M$.

We will show that $\tilde M$ is a plane Peano continuum.
Consider the decomposition $G$ of $\R^2$ given by $G(x)=\{x\}$ for $x\notin M$ and $G(x)=[x]$ for $x\in M$.
By \cite{Mo25} we have that $\R^2/G$ is homeomorphic to $\R^2$.
And given that $\tilde M\subset \R^2/G$ we conclude that $\tilde M$ is a plane Peano continuum.
Applying
\cite{Kato93}*{Theorem 6.2}
we have that $\tilde M$ is a singleton, that is, $f$ is trivially half cw\hyph expansive.
\end{proof}

With Proposition \ref{propHcwDisk} we can construct the following example that explains the meaning of $\epsilon_0$ in
Proposition \ref{prop:tildeM=M} and Theorem \ref{thm:tildeM=M}.

\begin{ex}
 Let $f_i\colon D_i\to D_i$, $i=1,2$, be two copies of the half cw\hyph expansive homeomorphism
 given in Proposition \ref{propHcwDisk}.
 Identifying the corresponding points of the boundaries of the disks we obtain
 a half cw\hyph expansive homeomorphism $f\colon \Sdos\to\Sdos$ of the 2-sphere.
 Let $\gamma$ be the circle in the sphere associated to the boundaries of the disks.
 In this case the quotient collapses the invariant circle $\gamma$ and the quotient space
 is not a surface, it is homeomorphic to the union of two tangent spheres in $\R^3$.
 Also, we see that $b_1(\gamma)\neq 0$.
\end{ex}


\section{Examples with infinitely many fixed points.}
In this section by means of a series of steps, named constructions, Theorem \ref{Teo D} is proved.
\label{secPartPert}
To this end we will perform a perturbation of a cw\hyph expansive homeomorphism of a compact surface in order to
obtain new examples of cw\hyph expansive homeomorphisms with particular properties.

Given a homeomorphism $f\colon M\to M$ and a closed set $D\subseteq M$, a \emph{modification of $f$ in $D$} is a homeomorphism $g\colon M\to M$ such that $f|_{M\setminus D}=g|_{M\setminus D}$. Every such modification $g$ is determined by a homeomorphism $h\colon D\to f(D)$ such that $h|_{\partial D}=f|_{\partial D}$ ($h=g|_D$). In this case we sometimes refer to $h$ itself as the modification.

\begin{rmk}
Note that if $g$ is a modification of $f$ in $D$ as before then $f(D)=g(D)$ and $\dist_{C^0}(f,g)\leq\diam f(D)$, where
$\dist_{C^0}$ was defined in \eqref{ecuDistC0}.
\end{rmk}

We start with the area preserving linear map $T\colon\R^2\to\R^2$ given by $T(x,y)=(\lambda x, \lambda^{-1}y)$ where $\lambda>1$ is fixed. This map transforms a line of equation $y=kx$ into the line $y=k\lambda^{-2}x$ and leaves invariant the hyperbolas $xy=k$.

In order to obtain examples without wandering points we will construct area preserving perturbations.
For this purpose we will need the following result, where $\mu$ stands for the Lebesgue measure.

\begin{thm}[\cite{OU}*{Corollary 3}]
\label{thmOU}
If $D,E\subseteq \R^2$ are diffeomorphic to closed rectangles,
$\mu(D)=\mu(E)$,
and $\partial S\colon\partial D\to\partial E$ is a homeomorphism then there is an area preserving homeomorphism $S\colon D\to E$ such that $S|_{\partial D}=\partial S$.
\end{thm}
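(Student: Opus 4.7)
The plan is to reduce the problem to the core Oxtoby--Ulam measure-correction statement on a standard disk. First, using the diffeomorphisms that identify $D$ and $E$ with a closed rectangle, I would transport the problem to the unit square $R=[0,1]^2$; this replaces the Lebesgue measures on $D$ and $E$ with two finite Borel measures on $R$, both of the same total mass (since $\mu(D)=\mu(E)$), nonatomic, vanishing on $\partial R$, and positive on every nonempty open subset of the interior. These are exactly the ``good'' measures for which the Oxtoby--Ulam machinery applies.

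Next, I would produce some (generally not area preserving) homeomorphism $T\colon D\to E$ extending $\partial S$. A standard way is the Alexander trick: identify $D$ with the closed unit disk via a homeomorphism sending $\partial D$ onto the unit circle, do the analogous identification for $E$, and extend the boundary homeomorphism radially by coning to the origin. This gives a homeomorphism $T$ with $T|_{\partial D}=\partial S$ whose push-forward $\nu=T_\ast(\mu|_D)$ is a finite Borel measure on $E$ with $\nu(E)=\mu(E)$, nonatomic, and vanishing on $\partial E$.

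The heart of the argument is then the following measure-correction statement, which is the substance of the Oxtoby--Ulam theorem proper: if $\nu_1,\nu_2$ are two nonatomic Borel measures on $E$ with the same total mass, both giving zero mass to $\partial E$ and positive mass to every nonempty open subset of $E\setminus\partial E$, then there exists a homeomorphism $R\colon E\to E$ with $R|_{\partial E}=\mathrm{id}$ such that $R_\ast\nu_1=\nu_2$. Applied with $\nu_1=\nu$ and $\nu_2=\mu|_E$, the composition $S=R\circ T$ is a homeomorphism with $S|_{\partial D}=\partial S$ and $S_\ast(\mu|_D)=\mu|_E$, which is the required area preserving extension.

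The main obstacle is proving this measure-correction statement. The classical route is to build $R$ as a uniform limit of homeomorphisms $R_n\colon E\to E$ fixing $\partial E$, constructed inductively via nested dyadic subdivisions of $E$ into topological rectangles of mesh tending to zero, so that at stage $n$ each piece carries the same $\nu_1$-mass as the corresponding piece does of $\nu_2$-mass after applying $R_n$. The delicate point is to arrange the step from $R_{n-1}$ to $R_n$ so that $\dist_{C^0}(R_n,R_{n-1})\leq 2^{-n}$ while keeping $R_n$ a homeomorphism fixing $\partial E$ and realising the refined subdivision; once this is done the uniform limit $R$ is automatically a boundary-preserving homeomorphism transporting $\nu_1$ to $\nu_2$, since measures of all sufficiently small dyadic rectangles match up. The nonatomicity of the measures and their positivity on open sets is exactly what makes the inductive refinement possible at each stage.
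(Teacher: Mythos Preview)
The paper does not prove this statement at all: it is quoted verbatim as \cite{OU}*{Corollary 3} and used as a black box in the constructions that follow. Your outline is a correct sketch of the classical Oxtoby--Ulam argument (Alexander-trick extension followed by measure correction via an inductive dyadic scheme), so you have supplied a proof where the paper supplies only a citation. One small omission: when you introduce $\nu=T_\ast(\mu|_D)$ you should also note that $\nu$ is positive on nonempty open subsets of $E\setminus\partial E$, since the measure-correction theorem you invoke requires this of both measures; it holds because $T$ is a homeomorphism and Lebesgue measure has this property on $D$.
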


The example is developed in a series of constructions.

\begin{con}
We start with a modification $T_0$ of $T$ as follows.
Consider the following subsets
\begin{equation*}
\begin{gathered}
D^+=\{(x,y):1\leq xy\leq2, \lambda^{-1}x\leq y\leq\lambda^3x\},\quad E^+=T(D^+),\\
D^-=\{(x,y):1/2\leq xy\leq1, \lambda^{-1}x\leq y\leq\lambda^3x\}, \quad E^-=T(D^-),\\
l_D=\{(x,y):xy=1, \lambda^{-1}x\leq y\leq\lambda^3x\},\quad l_E=T(l_D),\\
D=D^+\cup D^-,\quad E=T(D)=E^+\cup E^-.
\end{gathered}
\end{equation*}
Let $p$ and $q$ be the endpoints of the arc $l_D$ as in Figure \ref{figPert}.

\begin{figure}[h]
\centering
\includegraphics[scale=.583]{fig1.PNG}
\includegraphics[scale=.583]{fig2.PNG}
\caption{Construction of $T_0$}
\label{figPert}
\end{figure}
Note that $u=(1,1)\in l_D\cap l_E$.
Let $h\colon l_D\to l_E$ be a homeomorphism such that $h(p)=T(p)$, $h(q)=T(q)$ and $h(u)=u$.
Now consider the map $\partial T^+\colon\partial D^+\to\partial E^+$
given by $\partial T^+|_{\partial D^+\setminus l_D}=T|_{\partial D^+\setminus l_D}$ and $\partial T^+|_{l_D}=h$.
As $\partial T^+$ is a homeomorphism,
$D^+$ and $E^+$ are diffeomorphic to rectangles
and $\mu(D^+)=\mu(E^+)$, by Theorem \ref{thmOU} we can extend $\partial T^+$ to an area preserving homeomorphism $T^+\colon D^+\to E^+$. Analogously, we can find an area preserving homeomorphism $T^-\colon D^-\to E^-$ such that $T^-|_{\partial D^-\setminus l_D}=T|_{\partial D^-\setminus l_D}$ and $T^-|_{l_D}=h$. As $T^+$ and $T^-$ coincide (with $h$) in $l_D$ we have an area preserving homeomorphism $T_0\colon D\to E$ given by $T_0|_{D^+}=T^+$ and $T_0|_{D^-}=T^-$.
The map $T_0$ has $u\in\interior D$ as a fixed point. Besides, as $T_0|_{\partial D}=T|_{\partial D}$ we can define a modification of $T$ in $D$, replacing $T$ by $T_0$ on $D$. This modification will have $u$ as a fixed point and will be area preserving because $T_0$ and $T$ are.
\end{con}

\begin{con} We will define a family of modifications $T_n$, $n\in\N$.
For $n\in\N$ let $M_n\colon\R^2\to\R^2$ the homothetic transformation $M_n(v)=v/2^n$, $D_n=M_n(D)$, $E_n=M_n(E)$ and $T_n=M_n\circ T_0\circ M_n^{-1}$.
Note that $M_n$ leaves invariant the lines $y=kx$ and takes a hyperbola $xy=k$ to $xy=k/4^n$. It can be easily checked that $T_n\colon D_n\to E_n$ is an area preserving homeomorphism with a fixed point $u_n=(2^{-n},2^{-n})\in\interior D_n$, that $E_n=T(D_n)$ and that $T_n|_{\partial D_n}=T|_{\partial D_n}$. Then each $T_n$ gives a modification of $T$.
\end{con}

Notice that if $n\neq m$ in the previous construction then $\interior E_n\cap\interior E_m=\varnothing$, so that we can make the modifications $T_n$ simultaneously. In fact we want to perform all the modifications $T_n$ for $n\geq n_0$ simultaneously, with $n_0\in\N$ to be chosen later.

\begin{con}\label{con:tildeT_1} Given $n_0\in\N$ and $n\geq n_0$, define $\tilde T_n$ as $T$ with the modifications $T_{n_0},\ldots,T_n$, and define $\tilde T$ as $T$ with all the modifications $T_n$ for $n\geq n_0$.
\end{con}

Clearly all the maps $\tilde T_n$ of the previous construction are area preserving homeomorphisms.

\begin{lem} The map $\tilde T$ is an area preserving homeomorphism.
\end{lem}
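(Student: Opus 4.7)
The plan is to establish that $\tilde T$ is a well-defined bijection, is continuous with continuous inverse, and preserves Lebesgue measure. Each property follows from the local structure of the modifications together with the fact that the domains $D_n$ and $E_n$ accumulate only at the origin.

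First I would verify that $\tilde T$ is well-defined as the piecewise map
\[
\tilde T(v) = \begin{cases} T_n(v) & \text{if } v\in D_n \text{ for some } n\geq n_0, \\ T(v) & \text{otherwise.} \end{cases}
\]
Since $D_n = M_n(D)$ and $D$ lies in the strip $1/2\leq xy\leq 2$, the sets $D_n$ lie in pairwise disjoint strips $2^{-(2n+1)}\leq xy\leq 2\cdot 4^{-n}$; in particular the $D_n$ are pairwise disjoint, so the case analysis has no ambiguity. Moreover $T_n|_{\partial D_n}=T|_{\partial D_n}$ by construction, so the assignment is consistent on the boundaries as well, and the same analysis with $E_n=T(D_n)$ shows that $\tilde T$ is a bijection of $\R^2$ whose inverse has the analogous piecewise form.

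Next I would check continuity. Any point $v\neq 0$ has a neighborhood $U$ whose closure avoids the origin; since $\diam D_n = \diam D /2^n\to 0$ and $D_n$ accumulates only at the origin, only finitely many $D_n$ meet $\overline U$, and after shrinking $U$ we may assume $U$ meets at most one $D_n$. On such a neighborhood $\tilde T$ agrees with either $T$ or with the homeomorphism obtained from a single modification $T_n$, so $\tilde T$ is continuous at $v$. The main obstacle is continuity at the origin: here I would observe that $\tilde T(0)=T(0)=0$, and that for any sequence $v_k\to 0$, each $\tilde T(v_k)$ either equals $T(v_k)$ (which tends to $0$ by continuity of $T$) or lies in some $E_{n_k}$ with $n_k\to\infty$ (forced by $v_k\in D_{n_k}$ and $v_k\to 0$), whence $\tilde T(v_k)\in E_{n_k}\to\{0\}$ since $\diam E_{n_k}\to 0$. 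The same argument applied to $\tilde T^{-1}$, using that $E_n$ also shrinks to the origin, shows that $\tilde T^{-1}$ is continuous at $0$.

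Finally, for area preservation, I would show that $\mu(\tilde T^{-1}(A))=\mu(A)$ for every Borel set $A\subseteq\R^2$. Write $A=A_0\cup \bigcup_{n\geq n_0}(A\cap E_n)$ where $A_0=A\setminus\bigcup_{n\geq n_0}E_n$, a disjoint decomposition up to the measure-zero boundaries $\partial E_n$. Then $\tilde T^{-1}(A\cap E_n)=T_n^{-1}(A\cap E_n)\subseteq D_n$, and since $T_n\colon D_n\to E_n$ is area preserving this set has the same measure as $A\cap E_n$. Similarly $\tilde T^{-1}(A_0)=T^{-1}(A_0)$ has measure $\mu(A_0)$ because $T$ is area preserving. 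Summing and using countable additivity gives $\mu(\tilde T^{-1}(A))=\mu(A)$, which is the desired area preservation property. Combined with the previous two steps, this yields that $\tilde T$ is an area preserving homeomorphism.
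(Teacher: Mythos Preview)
Your proof is correct and parallels the paper's argument closely for bijectivity and area preservation (the paper computes $\mu\bigl(\tilde T(A)\bigr)$ rather than $\mu\bigl(\tilde T^{-1}(A)\bigr)$, but the decomposition into the pieces lying in each $D_n$ or $E_n$ is the same). The one methodological difference is in the continuity step: instead of your local argument distinguishing the origin from the rest of the plane, the paper observes that the partial modifications $\tilde T_n$ satisfy $\dist_{C^0}(\tilde T_{n-1},\tilde T_n)\leq\diam E_n=2^{-n}\diam E_0$, so $\tilde T_n\to\tilde T$ uniformly and $\tilde T$ is continuous as a uniform limit; the same estimate on the inverses yields continuity of $\tilde T^{-1}$. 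Both routes rest on the same fact that $D_n,E_n$ shrink to the origin, and neither is deeper than the other; the uniform-convergence version has the minor convenience of treating $\tilde T$ and $\tilde T^{-1}$ symmetrically in one line. One small inaccuracy in your write-up: the strips $2^{-(2n+1)}\leq xy\leq 2\cdot 4^{-n}$ are not literally pairwise disjoint, since consecutive ones share a boundary hyperbola, but your subsequent remark that $T_n|_{\partial D_n}=T|_{\partial D_n}$ is exactly what resolves the potential ambiguity, so the argument stands.
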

\begin{proof}
First note that $\tilde T_n$ converges to $\tilde T$ pointwise, and that $\tilde T$ is bijective. For all $n>n_0$ we have
  $$\dist_{C^0}(\tilde T_{n-1},\tilde T_n)=\dist_{C^0}(T|_{D_n},T_n)\leq\diam E_n =2^{-n}\diam E_0,$$
from which we conclude that $\tilde T_n$ converges uniformly to $\tilde T$, and $\tilde T$ is continuous.
A similar argument applied to the inverses of all these maps shows that in fact $\tilde T$ is a homeomorphism.
Finally, for any measurable subset $A\subseteq\bigcup_{n\geq n_0}D_n$ we have $\tilde T(A)\subseteq\bigcup_{n\geq n_0}E_n$,
then
\begin{equation*}
\begin{split}
\mu\bigl(\tilde T(A)\bigr)&=\sum_{n\geq n_0}\mu\bigl(\tilde T(A)\cap E_n\bigr)=\sum_{n\geq n_0}\mu\bigl(\tilde T(A\cap D_n)\bigr)\\
&=\sum_{n\geq n_0}\mu\bigl(T_n(A\cap D_n)\bigr)=\sum_{n\geq n_0}\mu(A\cap D_n)=\mu(A).
\end{split}
\end{equation*}
As $\tilde T$ equals $T$ outside $\bigcup_{n\geq n_0}D_n$, we see that $\tilde T$ is area preserving.
\end{proof}

Clearly $\tilde T$ has infinitely many fixed points, at least the $u_n$, $n\geq n_0$. Besides, as the hyperbolas $H_n\colon xy=2/4^n$ do not meet $\interior D_m$ for all $n,m\in\N$, we see that $\tilde T=T$ on these hyperbolas.

\begin{con}\label{con:tildeT_2}
Given an open neighborhood $V$ of the origin in $\R^2$,
let $\expc>0$ be small enough such that if $K=[-\expc,\expc]^2$ then $K\subseteq V$ and $T(K)\subseteq V$. Let $L=[-\xi/2,\expc/2]^2$ and suppose that another neighborhood $W\subseteq L$ of the origin is given. Let $n_0\in\N$ be such that $D_n\cup E_n\subseteq W$ for all $n\geq n_0$, and perform the modification $\tilde T$ of $T$ of Construction \ref{con:tildeT_1}.
\end{con}

\begin{rmk}\label{rmk:modpert} Note that $\dist_{C^0}(\tilde T,T)\leq\diam W$, because $E_n\subseteq W$ for all $n\geq n_0$.
\end{rmk}

\begin{lem}\label{lem:Ttildenopegafijos} For any continuum $C$ of $\diam C\leq\expc/2$ containing two different fixed points $u_n$ and $u_m$ ($m>n\geq n_0$), there exist $N\in\N$ such that $\tilde T^k(C)\subseteq K\cup T(K)$ for $k=0,\ldots, N$ and $\diam \tilde T^N(C)>\expc/2$.
\end{lem}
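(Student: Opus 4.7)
\emph{Proof plan.}
My plan is to take $N$ to be the smallest non-negative integer for which $\diam\tilde T^N(C)>\expc/2$, and then check separately (i) that this $N$ is finite and (ii) that $\tilde T^k(C)\subseteq K\cup T(K)$ for $k=0,\dots,N$. The hypothesis $\diam C\le\expc/2$ forces $N\ge 1$, so the inductive step in (ii) below is available.

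To prove (i), I would produce a point whose forward $\tilde T$-orbit escapes $K$ at hyperbolic rate. Fix any $j$ with $n<j\le m$ (say $j=n+1$). The continuous map $\phi(x,y)=xy$ takes $u_n,u_m$ to $4^{-n},4^{-m}$, and these straddle $2\cdot 4^{-j}$ (since $j>n$ gives $4^{-n}\ge 4\cdot 4^{-j}>2\cdot 4^{-j}$, while $j\le m$ gives $4^{-m}\le 4^{-j}<2\cdot 4^{-j}$). By connectedness of $C$ there is therefore $p_j\in C$ with $\phi(p_j)=2\cdot 4^{-j}$, i.e.\ $p_j\in H_j$. Using the paper's observation that $H_j\cap\interior D_l=\varnothing$ for every $l$, together with $T(H_j)=H_j$, a straightforward induction gives $\tilde T^k(p_j)=T^k(p_j)\in H_j$ for every $k\ge 0$. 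Writing $p_j=(x_0,y_0)$ with $x_0 y_0=2\cdot 4^{-j}\neq 0$, we have $T^k(p_j)=(\lambda^k x_0,\lambda^{-k}y_0)$; combining this with the fact that $u_n\in\tilde T^k(C)$ (since $u_n$ is a fixed point of $\tilde T$) gives $\diam\tilde T^k(C)\ge|\lambda^k x_0-2^{-n}|\to\infty$, which shows $N$ is finite.

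For (ii), when $0\le k<N$ the minimality of $N$ gives $\diam\tilde T^k(C)\le\expc/2$, and since $u_n\in D_n\subseteq L=[-\expc/2,\expc/2]^2$ lies in $\tilde T^k(C)$, the set $\tilde T^k(C)$ is contained in the closed Euclidean $(\expc/2)$-ball around $u_n$, hence in $u_n+[-\expc/2,\expc/2]^2\subseteq[-\expc,\expc]^2=K$. For $k=N$ I would apply $\tilde T$ once more to this inclusion: $\tilde T^N(C)\subseteq\tilde T(K)$, and $\tilde T(K)\subseteq K\cup T(K)$ because every $x\in K\setminus\bigcup_l D_l$ has $\tilde T(x)=T(x)\in T(K)$, while every $x\in K\cap D_l$ has $\tilde T(x)\in E_l\subseteq W\subseteq L\subseteq K$.

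The main obstacle is ensuring that the orbit of the auxiliary point $p_j$ really is governed by $T$ rather than by $\tilde T$; this is exactly what the isolation of $H_j$ from the modification interiors buys us, and without it the $\lambda^k$-escape mechanism would be unavailable.
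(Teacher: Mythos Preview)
Your proof is correct and follows essentially the same strategy as the paper's: both locate a point of $C$ on one of the invariant hyperbolas $H_j$ (the paper takes $j=m$, you allow any $n<j\le m$), use that $\tilde T=T$ there to get hyperbolic escape, and then use $u_n\in\tilde T^k(C)\cap L$ together with $\tilde T(K)\subseteq K\cup T(K)$ to control the location of the iterates. The only cosmetic difference is that the paper defines $N$ as the first time $\tilde T^N(C)\not\subseteq K$ and deduces the diameter bound from $\dist(L,\R^2\setminus K)=\expc/2$, whereas you define $N$ via the diameter directly and deduce the containment; either choice works.
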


\begin{proof} In fact, as $\diam C\leq\expc/2$, and $u_n\in L$ we see that $C\subseteq K$.
Now consider the branch $H^+$ on the first quadrant of the hyperbola $H_m\colon xy=2/4^m$.
As we can see $H^+$ separates $K$ in two components each of which containing one of the fixed points considered.
Therefore, as $C$ is a connected subset of $K$ containing both fixed points, we conclude that there exists a point $w\in H^+\cap C$. Now, as $T=\tilde T$ on $H^+$ we see that there exists $N'\in\N$ such that $\tilde T^{N'}(w)\notin K$. Consequently there exist a first $N\in\N$ such that $\tilde T^N(C)\not\subseteq K$. Clearly for this $N$ we have $\tilde T^k(C)\subseteq K\cup\tilde T(K)= K\cup T(K)$ for $k=0,\ldots, N$. Finally, as $\tilde T^N(C)$ is a closed set that meets $L$ (in $u_n$) and $\R^2\setminus K$ we conclude that $\diam \tilde T^N(C)>\expc/2$, because $\dist(L,\R^2\setminus K)=\expc/2$.
\end{proof}

Now we are ready to construct the desired example of a cw\hyph expansive homeomorphism of a compact surface admitting infinitely many fixed points and with no wandering points.

\begin{con}\label{con:elejemplo} Let $M=\R^2/\Z^2$ be the flat torus, and $f\colon M\to M$ the linear Anosov diffeomorphism given by the matrix $\bigl[\begin{smallmatrix}2&1\\1&1\end{smallmatrix}\bigr]$.
Let $U$ be an open neighborhood in $M$ of the fixed point of $f$ and assume that there is an isometric and area preserving local chart $\varphi\colon V\to U$, where $V$ is an open neighborhood of the origin in $\R^2$. Let $\lambda>1$ and $\lambda^{-1}$ the eigenvalues of $f$. We can also require that $f\circ\varphi(x)=\varphi\circ T(x)$, for $x\in T^{-1}(V)\cap V$, where $T$ is the linear map $T(x,y)=(\lambda x, \lambda^{-1}y)$ considered at the beginning of this section. As $\varphi$ is isometric, any modification $\tilde T$ of $T$ on a closed subset of $T^{-1}(V)\cap V$ gives a modification $g$ of $f$ such that, $\dist_{C^0}(f,g)=\dist_{C^0}(T,\tilde T)$.

As $f$ is an expansive homeomorphism, it is in particular cw\hyph expansive.
Let $\expc>0$ be an expansivity constant small enough to have $K\cup T(K)\subseteq V$ and $\expc\leq\epsilon_1$, where $K$ is as in Construction \ref{con:tildeT_2} and $\epsilon_1$ is from Corollary \ref{coroToro}.
Let $\delta>0$ be such that $B_{\delta}(f)\subseteq\U$ for the neighborhood $\U$ of Corollary \ref{coroToro}, and such that $W=B_{\delta/2}\bigl((0,0)\bigr)\subseteq L$, where $L$ is as in Construction \ref{con:tildeT_2}. With the $V$, $\expc$ and $W$ chosen, perform the perturbation $\tilde T$ of $T$ of Construction \ref{con:tildeT_2}. As this perturbation is in the closed subset $\overline W\subseteq T^{-1}(V)\cap V$, we have a corresponding perturbation $g$ of $f$ of the same size
$$\dist_{C^0}(f,g)=\dist_{C^0}(T,\tilde T)\leq\diam W<\delta,$$
where we used Remark \ref{rmk:modpert}. Then by the choice of $\delta>0$ we have that $g\in\U$, so that $g$ is half cw\hyph expansive with constant $\expc$. Consider the equivalence relation $\sim$ of Definition \ref{def:releqiv} associated to $g$ and $\expc$, and the homeomorphism $\tilde g$ on the quotient space $\tilde M$. By Theorem \ref{teo:cwexpgordo<=>extecwxp} we know that $\tilde g$ is cw\hyph expansive, and by Corollary \ref{coroToro} we have that $\tilde M$ is homeomorphic to $M$, a 2-torus.
\end{con}

\begin{thm}
\label{teoEjInfFij}
The cw\hyph expansive homeomorphism $\tilde g$ of the 2-torus obtained in Construction \ref{con:elejemplo} has infinitely many fixed points and empty wandering set.
\end{thm}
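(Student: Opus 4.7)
The plan is to establish the two assertions separately, both by lifting properties of $g$ to $\tilde g$ through the monotone quotient $q\colon M\to\tilde M$.

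For the infinitude of fixed points of $\tilde g$, I would show that the fixed points $\varphi(u_n)$ of $g$ (for $n\geq n_0$), which exist because each $u_n$ is fixed by $\tilde T$ and $\varphi$ conjugates $\tilde T$ with $g$, project to pairwise distinct fixed points of $\tilde g$. Suppose for contradiction that $\varphi(u_n)\sim\varphi(u_m)$ for some $m>n\geq n_0$: by Definition \ref{def:releqiv} there is an $\expc$-stable continuum $C\subseteq M$ containing both. Since $g$ is half cw-expansive with constant $\expc$, the continuum $C$ is in fact $\expc/2$-stable, so $\diam g^k(C)\leq\expc/2$ for all $k\in\Z$. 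Because $W\subseteq L$ and the fixed points sit in $L$, the continuum $C$ lies in the chart image $U$, and since $\varphi$ is isometric its preimage $\hat C=\varphi^{-1}(C)$ is a continuum in $\R^2$ with $\diam\hat C\leq\expc/2$ containing $u_n$ and $u_m$. Lemma \ref{lem:Ttildenopegafijos} then yields $N\in\N$ such that $\tilde T^k(\hat C)\subseteq K\cup T(K)\subseteq V$ for $k=0,\ldots,N$ and $\diam\tilde T^N(\hat C)>\expc/2$. As $\varphi$ is isometric and $g^k(C)=\varphi(\tilde T^k(\hat C))$ for these $k$, we would get $\diam g^N(C)>\expc/2$, contradicting the $\expc/2$-stability. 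Thus the points $q(\varphi(u_n))$, $n\geq n_0$, are pairwise distinct fixed points of $\tilde g$.

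For the empty wandering set, I would argue via Poincaré recurrence. Each finite-stage modification $\tilde T_n$ is area preserving by construction, and the preceding lemma established that the limit $\tilde T$ is area preserving as well. Since the chart $\varphi$ is also area preserving, and $f$ coincides with the area-preserving linear Anosov map outside $U$, the homeomorphism $g$ preserves the normalized Lebesgue measure $\mu$ on the torus $M$. As $\mu$ is a Borel probability measure of full support on a compact space, Poincaré recurrence gives a dense set of recurrent points, and hence $\Omega(g)=M$. To transfer this to $\tilde g$, pick any $[x]\in\tilde M$ and any open neighborhood $\tilde U$ of $[x]$; then $q^{-1}(\tilde U)$ is an open neighborhood of $x$ in $M$, so by non-wandering of $x$ there exist $n\neq 0$ and $y\in q^{-1}(\tilde U)$ with $g^n(y)\in q^{-1}(\tilde U)$. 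Applying $q$ gives $q(y)\in\tilde U\cap\tilde g^{-n}(\tilde U)$, showing $[x]$ is non-wandering for $\tilde g$.

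The main obstacle I anticipate is the careful chart bookkeeping in the fixed points argument: one needs $C$ to lie entirely inside $U$ and the iterates $\tilde T^k(\hat C)$ to remain in $V$ throughout the finite window $k=0,\ldots,N$, so that the isometric conjugacy can transport the diameter blow-up from $\R^2$ back to $M$. This is exactly what the containment $\tilde T^k(\hat C)\subseteq K\cup T(K)\subseteq V$ from Lemma \ref{lem:Ttildenopegafijos} provides, together with the choice in Construction \ref{con:elejemplo} of $\expc$ with $K\cup T(K)\subseteq V$ and of $W\subseteq L$. Once these inclusions are checked, both parts of the theorem follow without further work.
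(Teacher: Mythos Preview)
Your proof is correct and follows essentially the same route as the paper: area preservation of $g$ (via $\tilde T$ and the isometric, area-preserving chart) forces $\Omega(g)=M$, which descends to $\tilde g$ through the quotient map; and the fixed points $\varphi(u_n)$ cannot be $\sim$-identified because Lemma~\ref{lem:Ttildenopegafijos} forces any $\expc/2$-small continuum joining $u_n$ and $u_m$ to grow beyond diameter $\expc/2$ under $\tilde T$, which pulls back through the isometric chart to contradict $\expc/2$-stability of $C$. Your handling of the chart bookkeeping (ensuring $C\subseteq U$ and that the iterates $\tilde T^k(\hat C)$ stay in $K\cup T(K)\subseteq V$ so the isometry transports diameters) and your explicit transfer of non-wandering to the quotient are slightly more detailed than what the paper writes, but the arguments are the same.
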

\begin{proof}
On one hand, as $\tilde T\colon K\to T(K)$ and $\varphi$ are area preserving, we see that $g$ is area preserving.
Then the wandering set of $g$ is empty. Consequently the wandering set of the quotient $\tilde g$ is empty. On the other hand, for $n\geq n_0$, where $n_0$ is as in Construction \ref{con:tildeT_2}, consider the fixed points $p_n$ of $g$ corresponding to the fixed points $u_n$ of $\tilde T$.
We will show that different fixed points $p_n$ and $p_m$ of $g$ are not identified by $\sim$, so that all this infinitely many points remains as infinitely many fixed points of $\tilde g$. In fact suppose on the contrary that $p_n\sim p_m$ with $m>n\geq n_0$. Then, by the  definition of $\sim$, there exists a continuum $C$ in $M$ containing $p_n$ and $p_m$, such that $\diam g^k(C)\leq\expc/2$ for all $k\in\Z$.
Then the continuum $C'=\varphi^{-1}(C)$ will satisfy $u_n,u_m\in C'$ and $\diam \tilde T^k(C')\leq \expc/2$, for all $k\in\Z$, which contradicts Lemma \ref{lem:Ttildenopegafijos}.
\end{proof}

\begin{rmk}
 It is clear that with the techniques developed in this section we can perturb an arbitrary pseudo-Anosov map of an arbitrary compact surface in a neighborhood of a periodic orbit.
\end{rmk}

\section{Cw\hyph expansivity with the shadowing property}
\label{secCwSh}

In this section we will prove Theorem \ref{Teo E}, i.e., that the 2-sphere admits a cw\hyph expansive homeomorphism with the shadowing property.
For the proof we develop some general results. Some of them could be well known, but as we have not found them in the literature and the proofs are short we include the details.

\begin{lem}\label{lem:uniformopenness} Let $M$ and $N$ be compact metric spaces and $q\colon M\to N$ a continuous and open map. Then for every $\rho>0$ there exists $\nu>0$ such that $q\bigl(B_{\rho}(x)\bigr)\supseteq B_{\nu}\bigl(q(x)\bigr)$ for all $x\in M$.
\end{lem}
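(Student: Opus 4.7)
The plan is to argue by contradiction, combining sequential compactness of $M$ with the openness of $q$ at a carefully chosen limit point. Suppose the conclusion fails: there is $\rho>0$ such that for every $n\in\N$ one can pick $x_n\in M$ and $y_n\in N$ with $\dist\bigl(y_n,q(x_n)\bigr)<1/n$ but $y_n\notin q\bigl(B_\rho(x_n)\bigr)$.

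Since $M$ is compact, I would pass to a subsequence with $x_n\to x$ for some $x\in M$. Continuity of $q$ then yields $q(x_n)\to q(x)$, and combining this with $\dist\bigl(y_n,q(x_n)\bigr)<1/n$ gives $y_n\to q(x)$ in $N$. The key trick is to exploit a \emph{smaller} ball around the limit $x$: for $n$ large enough that $\dist(x_n,x)<\rho/2$, the triangle inequality gives the inclusion $B_{\rho/2}(x)\subseteq B_\rho(x_n)$, so
\[
q\bigl(B_{\rho/2}(x)\bigr)\subseteq q\bigl(B_\rho(x_n)\bigr).
\]

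Now I would use the hypothesis that $q$ is open: $q\bigl(B_{\rho/2}(x)\bigr)$ is an open subset of $N$ containing $q(x)$, so it contains some ball $B_\epsilon\bigl(q(x)\bigr)$. Since $y_n\to q(x)$, for all sufficiently large $n$ we have $y_n\in B_\epsilon\bigl(q(x)\bigr)\subseteq q\bigl(B_{\rho/2}(x)\bigr)\subseteq q\bigl(B_\rho(x_n)\bigr)$, directly contradicting the choice of $y_n$.

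The only real obstacle is recognizing that one must replace the ball $B_\rho(x)$ by $B_{\rho/2}(x)$ before invoking openness: using $B_\rho(x)$ itself would give preimages near $x$, not near $x_n$, and the argument would stall. Once the smaller radius is used, the triangle inequality guarantees the needed inclusion and the contradiction follows immediately. Note that compactness of $N$ is in fact not used; only compactness of $M$ is required.
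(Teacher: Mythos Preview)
Your proof is correct and follows essentially the same approach as the paper's own argument: a contradiction via sequential compactness of $M$, passing to a limit point $x$, applying openness of $q$ to a half-radius ball $B_{\rho/2}(x)$, and using the triangle inequality to nest this ball inside $B_\rho(x_n)$ for large $n$. The only cosmetic difference is that you track explicit witnesses $y_n\notin q\bigl(B_\rho(x_n)\bigr)$, whereas the paper phrases the contradiction as $q\bigl(B_\rho(x_n)\bigr)\supseteq B_{\delta/4}\bigl(q(x_n)\bigr)$ directly; your observation that compactness of $N$ is unused is also correct.
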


\begin{proof} If this is not the case then there exist $\rho>0$ and a sequence $(x_n)_{n\in\N}$ in $M$ such that $q\bigl(B_{\rho}(x_n)\bigr)\not\supseteq B_{1/n}\bigl(q(x_n)\bigr)$ for all $n\in\N$. As $M$ is compact we may assume that $x_n\to x\in M$. Then $q(x_n)\to q(x)$ by the continuity of $q$.
Let $U=q\bigl(B_{\rho/2}(x)\bigr)$ which is open in $N$ because $q$ is an open map. Thus $U$ contains a neighborhood $B_{\delta}(q(x))$ for some $\delta>0$. Let $n_0$ be such that $x_n\in B_{\rho/4}(x)$ and $q(x_n)\in B_{\delta/2}\bigl(q(x)\bigr)$ for all $n\geq n_0$. Then $B_{\rho}(x_n)\supseteq B_{\rho/2}(x)$ and hence $q\bigl(B_{\rho}(x_n)\bigr)\supseteq B_{\delta/4}\bigl(q(x_n)\bigr)$ for every $n\geq n_0$, contradicting that $q\bigl(B_{\rho}(x_n)\bigr)\not\supseteq B_{1/n}\bigl(q(x_n)\bigr)$ for $n$ large enough.
\end{proof}

Let $f\colon M\to M$ be a homeomorphism on a metric space $(M,\dist)$.
Given $\delta>0$, a bi-infinite sequence $(x_n)_{n\in\Z}$ in $M$ is a \emph{$\delta$-pseudo orbit} if $\dist\bigl(f(x_n),y_{n+1}\bigr)<\delta$ for all $n\in\Z$.
If $(x_n)_{n\in\Z}$ is a $\delta$-pseudo orbit and $\epsilon>0$ we say that $x\in M$ \emph{$\epsilon$-shadows} the pseudo orbit if $\dist(f^n(x),x_n)<\epsilon$ for all $n\in\Z$.
We say that the homeomorphism $f\colon M\to M$ has the \emph{shadowing property} if
for all $\epsilon>0$ there is $\delta>0$ such that every $\delta$-pseudo orbit can be $\epsilon$-shadowed, and in this case we say that $\delta$ is \emph{shadowing constant} associated to $\epsilon$.

\begin{prop}\label{prop:semiconjsombreado}
Let $M$ and $N$ be compact metric spaces, $f\colon M\to M$ and $g\colon N\to N$ homeomorphisms and $q\colon M\to N$ a continuous and open onto map such that $q\circ f=g\circ q$.
If $f$ has the shadowing property then $g$ has the shadowing property.
\end{prop}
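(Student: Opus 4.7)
My plan is to lift an arbitrary pseudo orbit of $g$ to a pseudo orbit of $f$ using the uniform-openness of $q$ (Lemma \ref{lem:uniformopenness}), then shadow that lift in $M$ using the shadowing property of $f$, and finally push the shadowing point forward via $q$ to shadow the original pseudo orbit in $N$.

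Given $\epsilon>0$, I would first use uniform continuity of $q$ on the compact space $M$ to pick $\rho>0$ with $\dist_M(a,b)<\rho\Rightarrow\dist_N(q(a),q(b))<\epsilon$. Next I would apply the shadowing property of $f$ to $\rho$ to get a shadowing constant $\rho_1>0$. To be able to lift a pseudo orbit in both time directions I would also invoke the uniform continuity of $f$ to find $\rho_2\in(0,\rho_1)$ with $\dist_M(a,b)<\rho_2\Rightarrow\dist_M(f(a),f(b))<\rho_1$, apply Lemma \ref{lem:uniformopenness} to each of $\rho_1$ and $\rho_2$ to obtain radii $\nu_1,\nu_2>0$ such that $B_{\nu_i}(q(x))\subseteq q(B_{\rho_i}(x))$ for all $x\in M$ and $i=1,2$, and invoke the uniform continuity of $g^{-1}$ to find $\delta_1>0$ with $\dist_N(a,b)<\delta_1\Rightarrow\dist_N(g^{-1}(a),g^{-1}(b))<\nu_2$. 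Finally I would set $\delta=\min\{\nu_1,\delta_1\}$.

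Given a $\delta$-pseudo orbit $(y_n)_{n\in\Z}$ of $g$, I would pick any $x_0\in q^{-1}(y_0)$ and construct $(x_n)_{n\in\Z}$ inductively with $q(x_n)=y_n$ and $\dist_M(f(x_n),x_{n+1})<\rho_1$. For the forward step, since $\dist_N(q(f(x_n)),y_{n+1})=\dist_N(g(y_n),y_{n+1})<\delta\leq\nu_1$, Lemma \ref{lem:uniformopenness} applied at $f(x_n)$ produces some $x_{n+1}\in B_{\rho_1}(f(x_n))$ with $q(x_{n+1})=y_{n+1}$. For the backward step I pass through $g^{-1}$: the hypothesis $\dist_N(g(y_{n-1}),y_n)<\delta\leq\delta_1$ yields $\dist_N(y_{n-1},g^{-1}(y_n))<\nu_2$, and since $g^{-1}(y_n)=q(f^{-1}(x_n))$, the lemma applied at $f^{-1}(x_n)$ provides $x_{n-1}$ with $q(x_{n-1})=y_{n-1}$ and $\dist_M(x_{n-1},f^{-1}(x_n))<\rho_2$, whence $\dist_M(f(x_{n-1}),x_n)<\rho_1$ by the choice of $\rho_2$.

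Thus $(x_n)$ is a $\rho_1$-pseudo orbit of $f$, so there exists $x\in M$ with $\dist_M(f^n(x),x_n)<\rho$ for every $n\in\Z$. Setting $y=q(x)$ and using $q\circ f=g\circ q$, the choice of $\rho$ gives $\dist_N(g^n(y),y_n)=\dist_N(q(f^n(x)),q(x_n))<\epsilon$, so $y$ $\epsilon$-shadows the original pseudo orbit, proving the shadowing property for $g$. The only delicate point is the backward leg of the lifting: the pseudo orbit condition is phrased in terms of $g$ rather than $g^{-1}$, so one must spend the extra uniform continuity of $g^{-1}$ (and of $f$) to convert $\dist_N(g(y_{n-1}),y_n)<\delta$ into a statement of the form $\dist_N(y_{n-1},q(f^{-1}(x_n)))<\nu_2$ that the uniform-openness lemma can consume; the remaining steps are routine uniform-continuity bookkeeping.
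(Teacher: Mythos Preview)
Your proof is correct and follows the same three-stage strategy as the paper: choose constants via uniform continuity of $q$, the shadowing property of $f$, and Lemma~\ref{lem:uniformopenness}; lift a $\delta$-pseudo orbit of $g$ to a pseudo orbit of $f$; shadow upstairs and push down by $q$. The only difference is that the paper dismisses the backward leg of the lift with ``a similar argument permits us to lift the negative $\delta$-pseudo orbit'', whereas you spell it out by passing through $g^{-1}$ and $f$ (hence the extra constants $\rho_2,\nu_2,\delta_1$); this is a genuine elaboration of a point the paper glosses over, not a different method.
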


\begin{proof}
Given $\epsilon>0$ let $\epsilon'>0$ be such that for all $x,y\in M$, $\dist(x,y)<\epsilon'$ implies $\dist\bigl(q(x),q(y)\bigr)<\epsilon$. Let $\delta'>0$ be a shadowing constant associated to $\epsilon'$ for $f$, and apply Lemma \ref{lem:uniformopenness} to get $\delta>0$ such that $q\bigl({\delta'}(x)\bigr)\supseteq B_{\delta}\bigl(q(x)\bigr)$ for all $x\in M$. We claim that $\delta$ is a shadowing constant associated to $\epsilon$ for $g$.

Indeed, given a $\delta$-pseudo orbit $(y_n)_{n\in\Z}$ of $g$ we lift it to a $\delta'$-pseudo orbit of $f$ as follows.
Take $x_0\in M$ such that $q(x_0)=y_0$.
As $\dist\bigl(g(y_0),y_1\bigr)<\delta$ and $q\bigl(f(x_0)\bigr)=g(y_0)$, by the choice of $\delta$ we can find $x_1\in B_{\delta'}\bigl(f(x_0)\bigr)$ such that $q(x_1)=y_1$. Doing this inductively we see that we can lift the positive $\delta$-pseudo orbit $(y_n)_{n\geq0}$ to a positive a $\delta'$-pseudo orbit. A similar argument permits us to lift the negative $\delta$-pseudo orbit $(y_n)_{n\leq0}$, and so the entire $\delta$-pseudo orbit.
By the choice of $\delta'$ there exists $x\in M$ that $\epsilon'$-shadows the lifted $\delta'$-pseudo orbit.
Finally, by the choice of $\epsilon'$ we conclude that $y=q(x)$ $\epsilon$-shadows the initially given $\delta$-pseudo orbit.
\end{proof}

\begin{lem}\label{lem:hayconexo}
Let $M$ and $N$ be compact metric spaces and $q\colon M\to N$ a continuous and open map. Then for every non trivial continuum $C\subseteq N$ there exists a non trivial subcontinuum of $q^{-1}(C)$.
\end{lem}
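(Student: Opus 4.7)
The aim is to produce a nondegenerate subcontinuum of $q^{-1}(C)$. Since connected components of the compact set $q^{-1}(C)$ are closed and connected, hence continua, it suffices to exhibit a component containing more than one point. My plan is to pick distinct $y_1,y_2\in C$ and a preimage $x_0\in q^{-1}(y_1)$, let $K$ be the component of $x_0$ in $q^{-1}(C)$, and argue by contradiction that $K\neq\{x_0\}$.

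Assume $K=\{x_0\}$. Since components equal quasicomponents in the compact Hausdorff space $q^{-1}(C)$, and clopen neighborhoods of $x_0$ are closed under finite intersection, for every $\epsilon>0$ there is a set $U$ clopen in $q^{-1}(C)$ with $x_0\in U$ and $\diam U<\epsilon$. I choose $\epsilon<\dist\bigl(x_0,q^{-1}(y_2)\bigr)$, so that $U\cap q^{-1}(y_2)=\varnothing$; in particular $y_2\notin q(U)$, and $U':=q^{-1}(C)\setminus U$ is nonempty. Both $U$ and $U'$ are closed in $q^{-1}(C)$, hence compact in $M$, and they are disjoint, so $d_0:=\dist(U,U')>0$.

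The key step, and main obstacle, is to show that $q(U)$ is clopen in $C$; this contradicts the connectedness of $C$ because $q(U)$ is nonempty (it contains $y_1$) and proper (it misses $y_2$). Closedness is immediate since $q(U)$ is compact. For openness I use Lemma~\ref{lem:uniformopenness}: fix $\rho\in(0,d_0)$ and let $\nu>0$ satisfy $q\bigl(B_\rho(x)\bigr)\supseteq B_\nu\bigl(q(x)\bigr)$ for all $x\in M$. Given $x\in U$ and $y'\in B_\nu\bigl(q(x)\bigr)\cap C$, lift $y'$ to $x''\in B_\rho(x)$ with $q(x'')=y'$; then $x''\in q^{-1}(C)$, and $\dist(x'',x)<\rho<d_0=\dist(U,U')$ forces $x''\in U$, whence $y'\in q(U)$. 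Thus $B_\nu\bigl(q(x)\bigr)\cap C\subseteq q(U)$, so $q(U)$ is open in $C$. This contradiction shows that $K$ contains more than one point, and $K$ is the desired nondegenerate subcontinuum of $q^{-1}(C)$.
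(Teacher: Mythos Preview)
Your proof is correct and follows the same strategy as the paper: produce a proper nonempty clopen subset $U$ of $q^{-1}(C)$ and show that $q(U)$ is clopen in $C$, contradicting connectedness. The only notable difference is in the openness step: you invoke Lemma~\ref{lem:uniformopenness} together with the gap $d_0=\dist(U,U')$, whereas the paper simply writes the clopen set as $V=V_1\cap q^{-1}(C)$ with $V_1$ open in $M$ and observes directly that $q(V)=q(V_1)\cap C$, which is open in $C$ because $q$ is an open map---a shorter route to the same conclusion.
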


\begin{proof} Suppose on the contrary that $D=q^{-1}(C)$ is a totally disconnected set. Take $x,y\in D$ such that $q(x)\neq q(y)$. By continuity of $q$ there exists a neighborhood
$V\subseteq D$ of $x$ relative to $D$ such that $q(y)\notin q(V)$.
As $D$ is totally disconnected we may assume that $V$ is open and closed relative to $D$. Then $V=V_1\cap D=V_2\cap D$ for some open set $V_1\subseteq M$ and some closed set $V_2\subseteq M$. Therefore, $q(V)=q(V_1)\cap C=q(V_2)\cap C$ is open and closed relative to $C$, because $q$ is a continuous and open map. As $x\in q(V)$ and $y\notin q(V)$ we conclude that $C$ is not connected, a contradiction.
\end{proof}

\begin{prop}\label{prop:semiconjcw} Let $M$ and $N$ compact metric spaces, $f\colon M\to M$ and $g\colon N\to N$ homeomorphisms and $q\colon M\to N$ a continuous and open onto map such that $q\circ f=g\circ q$, and with the property that $q^{-1}(y)$ is totally disconnected for all $y\in N$. If $f$ is cw\hyph expansive then $g$ is cw\hyph expansive.
\end{prop}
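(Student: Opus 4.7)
The strategy is to transfer cw-expansivity from $f$ to $g$ by lifting small continua from $N$ to nontrivial continua in $M$ and controlling their diameters. Let $\expc>0$ be a cw-expansivity constant for $f$. I would show that some $\delta>0$ serves as a cw-expansivity constant for $g$.

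The key auxiliary step is the following diameter-transfer claim: for every $\epsilon>0$ there exists $\delta>0$ such that every continuum $C\subseteq M$ with $\diam q(C)\leq\delta$ satisfies $\diam C\leq\epsilon$. I would prove this by compactness in the hyperspace of subcontinua of $M$ (Hausdorff metric) in the style of the arguments used earlier in the paper. Assuming the claim fails, extract a sequence $(C_k)_{k\in\N}$ of continua with $\diam q(C_k)\to 0$ but $\diam C_k\geq\epsilon$. Pass to a Hausdorff-convergent subsequence $C_k\to C$; then $C$ is a continuum, $\diam C\geq\epsilon$ by continuity of the diameter, and $\diam q(C)=0$ by continuity of $q$ on compacta. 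So $q(C)$ is a single point and $C$ is a connected subset of a fiber $q^{-1}(y)$; since fibers are totally disconnected, $C$ is a singleton, contradicting $\diam C\geq\epsilon$.

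With the claim at hand, apply it with $\epsilon=\expc$ to get a corresponding $\delta>0$. I would verify that $\delta$ is a cw-expansivity constant for $g$ as follows. Let $\tilde C\subseteq N$ be a continuum with $\diam g^n(\tilde C)\leq\delta$ for all $n\in\Z$, and suppose for contradiction that $\tilde C$ is nontrivial. By Lemma \ref{lem:hayconexo} applied to $\tilde C$, the preimage $q^{-1}(\tilde C)$ contains a nontrivial subcontinuum $C$. Using $q\circ f=g\circ q$, for every $n\in\Z$ we have $q\bigl(f^n(C)\bigr)=g^n\bigl(q(C)\bigr)\subseteq g^n(\tilde C)$, hence $\diam q\bigl(f^n(C)\bigr)\leq\delta$. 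The claim then yields $\diam f^n(C)\leq\expc$ for all $n\in\Z$. By cw-expansivity of $f$, $C$ is trivial, contradicting that $C$ is nontrivial. Therefore $\tilde C$ is trivial, so $g$ is cw-expansive.

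I expect the main obstacle to be the diameter-transfer claim: the hypothesis of open $q$ is not used there, but the total disconnectedness of fibers is essential to close the compactness argument. Everything else is a direct consequence of this claim together with Lemma \ref{lem:hayconexo} and the cw-expansivity of $f$.
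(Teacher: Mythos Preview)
Your proof is correct and rests on the same ingredients as the paper's: Lemma~\ref{lem:hayconexo} to produce a nontrivial subcontinuum in the preimage, and a Hausdorff-limit argument in the hyperspace of continua that terminates in a nondegenerate connected subset of a totally disconnected fiber. The organization differs only in packaging: you first isolate a dynamics-free diameter-transfer claim (continua with small $q$-image have small diameter) and then apply it uniformly to all iterates $f^n(C)$, whereas the paper argues by contradiction on the cw-expansivity of $g$ directly, uses cw-expansivity of $f$ to pass to iterates $f^{n_k}(D'_k)$ of diameter at least $\expc$ whose $q$-images shrink, and only then takes the Hausdorff limit. Your formulation has the virtue of making explicit that openness of $q$ enters solely through Lemma~\ref{lem:hayconexo}, while the total-disconnectedness hypothesis is what drives the compactness step; otherwise the two arguments are essentially the same.
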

\begin{proof} Let $\expc>0$ be a cw\hyph expansivity constant for $f$. Suppose that $g$ is not cw\hyph expansive. Then for all $k\in\N$ there exist a non trivial continuum $C'_k\subseteq N$ such that
$\diam g^n(C'_k)<1/k$ for all $n\in\Z$. By Lemma \ref{lem:hayconexo}, for each $k\in\N$ there exists a non trivial connected component $D'_k\subseteq M$ of $q^{-1}(C'_k)$. Then, as $f$ is cw\hyph expansive, there exists $n_k\in\Z$ such that $\diam f^{n_k}(D'_k)\geq\expc$ for all $k\in\N$. Let $D_k=f^{n_k}(D'_k)$ and $C_k=g^{n_k}(C'_k)$ for $k\in\N$. We have that $\diam D_k\geq\expc$, $q(D_k)=C_k$ and $\diam C_k<1/k$, for all $k\in\N$. Taking a subsequence we have that $D_{k_m}\to D$ with respect to the Hausdorff metric, where $D$ is a continuum which will satisfy $\diam D\geq\expc$. Hence, as $q$ is continuous and $\diam C_{m_k}<1/m_k$ we see that $q(D)={y}$, for some $y\in N$. This is a contradiction since by hypothesis $q^{-1}(y)$ must be totally disconnected.
\end{proof}

Let $\Sdos$ be the 2-sphere obtained as the quotient of the 2-torus $\Tdos=\R^2/\Z^2$ by the map $T(x,y)=(-x,-y)$, and $q\colon \Tdos\to \Sdos$ the canonical map. Let $f\colon \Tdos\to \Tdos$ the linear Anosov diffeomorphism given by the matrix $\bigl[\begin{smallmatrix}2&1\\1&1\end{smallmatrix}\bigr]$, and $g\colon \Sdos\to \Sdos$ the induced homeomorphism.
More details of this construction can be found in \cite{Walters}*{Example 1, p. 140}.

\begin{thm}
\label{thm:ejemplocw+sh}
The homeomorphism $g\colon \Sdos\to \Sdos$ is cw\hyph expansive
and has the shadowing property.
\end{thm}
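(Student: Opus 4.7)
The plan is to apply the two preceding propositions, Proposition \ref{prop:semiconjsombreado} and Proposition \ref{prop:semiconjcw}, to the quotient map $q\colon\Tdos\to\Sdos$. Both results require that $q$ be a continuous open surjection commuting with the dynamics, and for cw-expansivity we additionally need that every fiber $q^{-1}(y)$ be totally disconnected. Once these hypotheses are verified, it suffices to know that the linear Anosov diffeomorphism $f$ on $\Tdos$ is cw-expansive and has the shadowing property, both of which are classical.

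First I would record that $q\colon\Tdos\to\Sdos$ is continuous and onto by construction, and that $q\circ f=g\circ q$ holds by the definition of $g$. The map $q$ is the quotient by the free-on-a-cofinite-set $\Z/2$-action generated by $T(x,y)=(-x,-y)$, so each fiber $q^{-1}(y)$ consists of either one or two points (two points away from the four ramification points of $T$, and one at each of them). In particular every fiber is finite and hence totally disconnected.

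Next I would verify that $q$ is an open map. This follows from the general fact that the quotient map for an action by homeomorphisms of a (finite) group is open: if $U\subseteq\Tdos$ is open, then $q^{-1}(q(U))=U\cup T(U)$, which is open because $T$ is a homeomorphism; by the definition of the quotient topology this means $q(U)$ is open in $\Sdos$.

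With these ingredients I would conclude as follows. The linear Anosov diffeomorphism $f$ is expansive and therefore cw-expansive (singletons are the only $\expc$-stable continua for small $\expc$), so Proposition \ref{prop:semiconjcw} applied to $q$ yields that $g$ is cw-expansive. The same $f$ is well known to have the shadowing property (linear hyperbolic toral automorphisms are topologically stable and satisfy shadowing; see any standard reference), so Proposition \ref{prop:semiconjsombreado} applied to $q$ yields that $g$ has the shadowing property. There is no genuine obstacle here: the only non-entirely-routine verification is the openness of $q$, and that reduces to the observation above about quotients by finite group actions.
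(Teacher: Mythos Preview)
Your proposal is correct and follows essentially the same approach as the paper: verify that $q$ is open with finite (hence totally disconnected) fibers, then apply Proposition~\ref{prop:semiconjsombreado} (using that the Anosov map $f$ has the shadowing property) and Proposition~\ref{prop:semiconjcw} (using that $f$ is expansive, hence cw-expansive). Your justification of the openness of $q$ via the finite group action is slightly more detailed than the paper's, which simply observes that $q$ is open, but the argument is the same.
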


\begin{proof}
Observe that the canonical map $q\colon \Tdos\to \Sdos$ is open. Then, as $f$ has the shadowing property, by Proposition \ref{prop:semiconjsombreado}, $g$ has the shadowing property.
On the other hand, as $f$ is expansive, and $q$ is a finite-to-1 map (in fact, each point has at most two preimages), we can apply Proposition \ref{prop:semiconjcw} and conclude that $g$ is cw\hyph expansive.
\end{proof}

\begin{prob}For simplicity let us say that a homeomorphism is \emph{cw-Anosov} if it is cw\hyph expansive and has the shadowing property.
Besides the map $g$ of the sphere, there are other examples of cw-Anosov homeomorphisms, of course, Anosov diffeomorphisms.
It would be interesting to classify all the cw-Anosov homeomorphisms of compact surfaces.
Some natural questions arises: are there cw-Anosov homeomorphisms of the torus not being (conjugate to) Anosov?
Does the genus two surface admit cw-Anosov homeomorphisms?
Does local stable and unstable sets of a cw-Anosov homeomorphism define singular foliations? Are these local stable sets locally connected?
\end{prob}

\begin{rmk}
The cw\hyph expansivity of $g$ could have been deduced from \cite{ArDend}*{Proposition 2.2.1} where
it is proved that in fact $g$ is cw2\hyph expansive, which means that there is $\expc>0$ such that
if $C_1,C_2\subseteq \Sdos$ are continua such that
$\diam g^n(C_1)\leq\expc$ for all $n\geq 0$ and $\diam g^{-n}(C_2)\leq\expc$ for all
$n\geq 0$ then $C_1\cap C_2$ has at most two points.
It is clear that cw2\hyph expansivity implies cw\hyph expansivity.
However, we think that the proof of the cw\hyph expansivity of $g$ given in Theorem \ref{thm:ejemplocw+sh}
(based on Proposition \ref{prop:semiconjcw}) is simpler and clearer than the proof in \cite{ArDend}.
\end{rmk}

\begin{bibdiv}
\begin{biblist}


\bib{APV}{article}{
author={A. Artigue},
author={M.J. Pacifico},
author={J.L. Vieitez},
title={N\hyph expansive homeomorphisms on surfaces},
journal={Comm. in Contemporary Math.},
volume={},
pages={},
year={2017}}

\bib{ArH}{article}{
author={A. Artigue},
title={Hyper\hyph expansive homeomorphisms},
journal={Publicaciones Matemáticas del Uruguay},
volume={14},
year={2013},
pages={72--77}}

\bib{ArRobN}{article}{
author={A. Artigue},
title={Robustly N\hyph expansive surface diffeomorphisms},
journal={Discrete Contin. Dyn. Syst.},
volume={36},
year={2016},
pages={2367--2376}}

\bib{ArAnomalous}{article}{
author={A. Artigue},
title={Anomalous cw\hyph expansive surface homeomorphisms},
journal={Discrete Contin. Dyn. Syst.},
volume={36},
year={2016},
pages={3511--3518}}

\bib{ArDend}{article}{
author={A. Artigue},
title={Dendritations of surfaces},
DOI={10.1017/etds.2017.14},
year={2017},
pages={1--53},
journal={Ergod. Th. \& Dynam.  Sys.}}

\bib{ArCa}{article}{
author={A. Artigue},
author={D. Carrasco-Olivera},
title={A note on measure-expansive diffeomorphisms},
journal={J. Math. Anal. Appl.},
volume={428},
year={2015},
pages={713--716}}

\bib{Bowen72}{article}{
author={R. Bowen},
title={Entropy\hyph expansive maps},
journal={Trans. of the AMS},
volume={164},
year={1972},
pages={323--331}}


\bib{CS}{article}{
author={M. Cerminara},
author={M. Sambarino},
title={Stable and unstable sets of $C^0$ perturbations of expansive homeomorphisms of surfaces},
journal={Nonlinearity},
volume={12},
year={1999},
pages={321--332}}

\bib{Daverman}{book}{
author={R.J. Daverman},
title={Decompositions of Manifolds},
year={1986},
publisher={Academic Press. Inc}}

\bib{DFPV}{article}{
author={L.J. Díaz},
author={T. Fisher},
author={M.J. Pacifico},
author={J.L. Vieitez},
title={Entropy expansiveness for partially hyperbolic diffeomorphisms},
journal={Discrete and Continuous Dynamical Systems},
volume={32},
year={2012},
pages={4195-4207}} 

\bib{FG}{article}{
author={A. Fakhari},
author={F.H. Ghane},
title={Perturbation of continuum-wise expansive homeomorphisms},
journal={Journal of Dynamical Systems and Geometric Theories},
volume={3},
year={2005},
pages={115--120}}

\bib{Hi}{article}{
author={K. Hiraide},
title={Expansive homeomorphisms of compact surfaces are pseudo-Anosov},
journal={Osaka J. Math.},
volume={27},
year={1990},
pages={117--162}}

\bib{Kato93}{article}{
author={H. Kato},
title={Continuum-wise expansive homeomorphisms},
journal={Canad. J. Math.},
volume={45},
number={3},
year={1993},
pages={576--598}}

\bib{Kur}{book}{
author={K. Kuratowski},
title={Topology},
volume={II},
publisher={Academic Press, New York and London},
year={1968}}

\bib{Lee}{article}{
author={K. Lee},
author={S. Lee},
journal={J. of the Chungcheong Math. Soc.},
volume={29},
year={2016},
title={Continuum-wise expansive homeomorphisms with shadowing}}

\bib{Le83}{article}{
author={J. Lewowicz},
title={Persistence in expansive systems},
journal={Ergod. Th. \& Dynam.  Sys.},
year={1983},
volume={3},
pages={567--578}}

\bib{L}{article}{
author={J. Lewowicz},
title={Expansive homeomorphisms of surfaces},
journal={Bol. Soc. Bras. Mat.},
volume={20},
pages={113--133},
year={1989}}

\bib{Mo25}{article}{
author={R.L. Moore},
title={Concerning upper-semicontinuous collections of continua},
journal={Transactions of the AMS},
volume={4},
year={1925},
pages={416--428}}

\bib{Mo1}{article}{
author={C.A. Morales},
title={A generalization of expansivity},
journal={Discrete Contin. Dyn. Syst.},
volume={32},
year={2012},
number={1},
pages={293--301}}

\bib{MoSi}{book}{
author={C. A. Morales},
author={V. F. Sirvent},
title={Expansive measures},
publisher={IMPA},
year={2013},
series={29o Col\'oq. Bras. Mat.}}

\bib{Nadler}{book}{
author={Nadler, S.},
title={Continuum Theory: An Introduction},
isbn={9780849306686},
series={Chapman \& Hall/CRC Pure and Applied Mathematics},
url={https://books.google.com.uy/books?id=QPVrKhv36ZAC},
year={1992},
publisher={Taylor \& Francis}}

\bib{OU}{article}{
title={Measure-preserving homeomorphisms and metrical transitivity},
author={J.C. Oxtoby},
author={S.M. Ulam},
journal={Annals of Mathematics},
volume={42},
number={3},
year={1941},
pages={874--920}}

\bib{PaVi}{article}{
author={M.J. Pacifico},
author={J.L. Vieitez},
title={Entropy expansivity and domination for surface diffeomorphisms},
journal={Rev. Mat. Complut.},
volume={21},
number={2},
year={2008},
pages={293--317}}

\bib{PaVi15}{article}{
author={M.J. Pacifico},
author={J.L. Vieitez},
title={On measure expansive diffeomorphisms},
journal={Proc. AMS},
volume={143},
year={2015},
pages={811--819}}

\bib{RoSt}{article}{
title={Monotone Transformations of Two-Dimensional Manifolds},
author={J.H. Roberts},
author={N.E. Steenrod},
journal={Annals of Mathematics},
volume={39},
year={1938},
pages={851--862}}

\bib{Walters}{book}{
author={P. Walters},
title={An introduction to ergodic theory},
publisher={Springer-Verlag New York, Inc.},
year={1982}}

\bib{Wilder}{book}{
author={R.L. Wilder},
title={Topology of Manifolds},
year={1949},
publisher={Amer. Math. Soc.}}

\end{biblist}
\end{bibdiv}

\vspace{20mm}

\noindent Mauricio Achigar,\\
{\tt machigar@unorte.edu.uy},\\
\\
\noindent Alfonso Artigue,\\
{\tt artigue@unorte.edu.uy},\\
\\
\noindent José Vieitez,\\
{\tt jvieitez@unorte.edu.uy},\\
\\
{\sc Departamento de Matemática y Estadística del Litoral},\\
{\sc Centro Universitario Regional Litoral Norte},\\
{\sc Universidad de la República.}\\
25 de Agosto 281, Salto (50000), Uruguay.

\end{document}